\newcommand{\red}[1]{\textcolor{black}{#1}} 
\newcommand{\reD}[1]{\textcolor{black}{#1}} 
 \newcommand{\green}[1]{\textcolor{black}{#1}}
 \newcommand{\purple}[1]{\textcolor{black}{#1}}
\def\UTFviii@defined#1{%
	\ifx#1\relax
	!!FIXME!!%
	\else
	\expandafter#1%
	\fi
}
\DeclareFontFamily{OT1}{pzc}{}
\DeclareFontShape{OT1}{pzc}{m}{it}{<-> s * [1.10] pzcmi7t}{}
\DeclareMathAlphabet{\mathpzc}{OT1}{pzc}{m}{it}
\newcommand{\deltxt}[1]{}
\newtheorem{theorem}{Theorem}[section]
\newtheorem{corollary}[theorem]{Corollary}
\newtheorem{lemma}[theorem]{Lemma}
\newtheorem{proposition}[theorem]{Proposition}
\theoremstyle{definition}
\newtheorem{definition}{Definition}[section]
\theoremstyle{remark}
\newcommand{\eval}[2][\right]{\relax
	\ifx#1\right\relax \left.\fi#2#1\rvert}
\newcommand\bR{{\mathbb{R}}}
\newcommand\bZ{{\mathbb Z}}
\newcommand\bV{{\mathbb V}}
\newcommand\Hom{{\rm Hom}}
\newcommand\dev{{\bf dev}}
\newcommand\clo{{\rm Cl}}
\newcommand\ra{\rightarrow}
\newcommand\emp{\emptyset}
\newcommand\Aff{{\mathbf{Aff}}}
\newcommand\Aut{{\mathbf{Aut}}}
\newcommand\Idd{{\rm I}}
\newcommand\bv{{\mathbf{v}}}
\newcommand\SL{{\mathsf{SL}}}
\newcommand\SO{{\mathsf{SO}}}
\newcommand\GL{{\mathsf{GL}}}
\newcommand{\vv}{\mathbf{v}}
\newcommand{\vw}{\mathbf{w}}
\newcommand{\Uu}{\mathbf{U}} 
\newcommand{\Uc}{\mathbf{UC}}
\newcommand{\llrrV}[1]{
	\left|\mkern-2mu\left|#1\right|\mkern-2mu\right|}
\begin{document}
	
	\title[Partially hyperbolic flows]{
		Partially hyperbolic flows on flat vector bundles with 
an application to complete affine manifolds.} 

	\author[S. Choi]{Suhyoung Choi}
	\thanks{This research was supported by the Basic Science Research Program through the National Research Foundation
		of Korea(NRF) funded by the Ministry of Education(2019R1A2C108454412), 
		Author orcid number: 0000-0002-9201-8210}
	\address{Department of Mathematical Sciences \\ KAIST \\Daejeon 34141, South Korea}
	
	\email{schoi@math.kaist.ac.kr}
	
	%
	
	
	\date{\today}
	
	%
	%
	%
	%
	%
	%
	%
	%
	%
	
	
	
	
	
	%
	\subjclass[2010]{Primary 57M50; Secondary 22E40}
	\keywords{affine manifold, Anosov representation, word-hyperbolic group, Gromov hyperbolic space, coarse geometry }
	
	\begin{abstract}

Let $N$ be a manifold of dimension $m$ with a flat vector bundle given by 
a representation $\rho:\pi_1(N) \ra \GL(n, \bR)$
where $\pi_1(N)$ is finitely generated. 
The holonomy group 
$\rho$ is a \emph{$k$-partially hyperbolic holonomy representation} if the flat bundle pulled back over the unit tangent bundle of a sufficiently large compact submanifold of $N$ splits into expanding, neutral, and contracting subbundles along the geodesic flow, where the expanding and contracting subbundles are $k$-dimensional with $k < n/2$.
Suppose that each element of $\rho(\pi_1(N))$ has an eigenvalue of norm $1$,
or, alternatively,  \red{$\rho$ has some singular values of subexponential growth
in terms of word length. }
We show that $\rho$ is a $P$-Anosov representation for a parabolic subgroup 
$P$ of $\GL(n, \bR)$ 
if and only if $\rho$ is a partially hyperbolic representation. 
We will primarily employ techniques from representation theory.
\purple{As an application, we will show that the equivalence holds} when $N$ is a complete affine $n$-manifold
and $\rho$ is a linear part of the holonomy representation. 
This had never been done over the full general linear group. 
	\end{abstract}
	\maketitle
	\tableofcontents


\section{Introduction}


\subsection{Notation}

Let $N$ be an $m$-manifold with a finitely generated fundamental group $\pi_1(N)$. 
Each flat vector bundle over $N$ can be described as 
$\bR^n_\rho(N):=\tilde N \times \bR^n/\sim$ where $(x, \vec{v}) \sim (\gamma(x), \rho(\gamma)(\vec{v}))$ for 
$\gamma \in \pi_1(N)$ where $\rho:\pi_1(N) \ra \GL(n, \bR)$ is a representation. 
A differential-geometry description of a flat bundle is a vector bundle with a flat linear connection. 

Let $M$ be a compact submanifold (with or without boundary) in $N$.
We assume that the inclusion map $M \hookrightarrow N$ induces a surjection on fundamental groups. The inverse image $\hat{M}$ of $M$ in $\tilde N$ is connected, and the deck transformation group $\Gamma_M$ of $\hat{M}$ is isomorphic to 
\green{$\pi_1(N)$. }
In these cases, we refer to $M$, the map $M \hookrightarrow N$, $\hat{M}$, the projection $p_{\hat{M}}: \hat{M} \ra M$, and $\Gamma_M$ as a \emph{fundamental group surjective {\rm (}FS\/{\rm }) submanifold}, an \emph{FS map}, an \emph{FS cover}, an \emph{FS covering map}, and an \emph{FS deck transformation group}, respectively.


A \emph{compact core} of a manifold is a compact submanifold homotopy equivalent to the ambient manifold. 
When a manifold is homotopy equivalent to a finite complex $K$, for $n=3$, Scott and Tucker \cite{ST89} proved the existence of a compact core, while for $n - \dim K \geq 3$, Stallings \cite{Stallings65} established this fact. Unfortunately, compact cores do not always exist (see Venema \cite{Venema80}). 
Although compact cores are preferable, we need to resort to FS submanifolds. This is fine for our purposes since we are working in coarse geometry essentially.


We will assume that $\partial M$ is convex and smooth in $N$. Let $d_M$ denote the path metric on \purple{$M$} induced from a Riemannian metric on $N$, and let $d_{\hat{M}}$ denote the path metric on $\hat{M}$ induced from it.

  



\begin{itemize} 
\item Let $\Uu M$ denote the space of direction vectors on $M$, i.e., 
the space of elements of the form 
$(x, \vec{v})$ for $x\in M$ and a unit vector $\vec{v} \in T_x M \setminus \{O\}$ 
with the projection $\pi_{\Uu M}: \Uu M \ra M$. 
\item Let $\Uu \hat{M}$ denote the space of direction vectors on $\hat{M}$
covering $\Uu M$ with the deck transformation group $\Gamma_M$.  
Let $\pi_{\Uu \hat{M}} : \Uu \hat{M} \ra \hat{M}$ denote the projection.
 \item  Let $d_{\Uu M}$ denote the path metric on $\Uu M$ obtained from
  the natural Riemannian metric
on the unit tangent bundle $\Uu M$ of $M$ 
obtained from the Riemannian metric of $M$
as a sphere bundle.   We will use 
$d_{\Uu \hat{M}}$ to denote the induced path metric on $\Uu \hat{M}$. 
\end{itemize} 


  
  A {\em complete isometric geodesic} in $\hat{M}$ is a geodesic that is an isometry of 
  $\bR$ into $\hat{M}$ equipped with \green{$d_{\hat{M}}$}. 
 Some but not all Riemannian geodesics are isometries. 
Actually, in \green{geodesic metric spaces}, these are identical to the notion of geodesics.
Since we are working on Riemannian metric spaces, we use the adjectives.  
  A {\em complete isometric geodesic} in $M$ is a geodesic that lifts to 
  a complete isometric geodesic in $\hat{M}$. 
\green{An} {\em isometric ray} in $\hat M$ is an isometric geodesic defined on  
$[0, \infty)$. 

  
%
  

We consider the subspace of $\Uu M$ where complete isometric geodesics pass. We denote this subspace by $\Uc M$, and call it a \emph{complete isometric geodesic \green{unit tangent} bundle}. The inverse image in $\Uu \hat{M}$ is a closed set denoted by $\Uc \hat{M}$. Since a limit of a sequence of isometric geodesics is an isometric geodesic, $\Uc M$ is compact and $\Uc \hat{M}$ is closed and locally compact. However, $\pi_{\Uu M}(\Uc M)$ may be a proper subset of $M$ for the
projection $\pi_{\Uu M}: \Uu M \ra M$, and
$\pi_{\Uu \hat{M}}(\Uc \hat{M})$ may be a proper subset of $\hat{M}$.

Let $\rho: \pi_1(N) = \Gamma_M \ra \GL(n, \bR)$ be a homomorphism.

There is an action of $\Gamma_M$ 
on $\Uc \hat{M} \times \bR^n$ given by 
\[ \gamma\cdot(x, \vec{v})  = (\gamma(x), \rho(\gamma)(\vec{v})) 
\text{ for } x \in \Uc\hat{M}, \vec{v} \in \bR^n.\]
Let $\bR^n_{\rho}$ denote the bundle over $\Uc M$
 given by taking the quotient $\Uc \hat{M} \times \bR^n$ by $\Gamma_M$. 

Recall that there is a geodesic flow $\phi$ in $\Uu M$ that restricts to one in $\Uc M$, also denoted by $\phi$. This lifts to a flow, still denoted by $\phi$, on $\Uc \hat M$. Hence, there is a flow $\Phi$ on $\Uc \hat M \times \bR^n$, which descends to a flow, also denoted by $\Phi$, on $\bR^n_{\rho}$.


\subsection{Main result} 
In this paper, we establish connections between the partial hyperbolicity in the context of dynamical systems and the \purple{Anosov} properties. 

\reD{Unlike the abstract dominated splitting discussed in many previous articles such as \cite{BPS} and \cite{KP22}, here we require actual splitting of smooth bundles and smooth flows, along with the presence of expanding and contracting properties in addition to the domination property.}



We need the following objects
since we need to work on manifolds and not just on groups. 
\begin{definition}[Partial hyperbolicity]\label{defn:phyp} 
	Suppose that $M$ is a compact Riemannian manifold with convex boundary.
	Let $\hat{M}$ be a regular cover of $M$ with the deck transformation group $G_M$.
	A representation $\rho: G_M \ra \GL(n, \bR)$ is {\em partially hyperbolic in a bundle sense 
over $M$} if 
	the following hold for a Riemannian metric on $M$: 
	\begin{enumerate} 
		\item[(i)] 
		There exist nontrivial $C^0$-subbundles $\bV_+, \bV_0$, and $\bV_-$ in $\bR^n_{\rho}$ 
		with fibers \[\bV_+(x), \bV_0(x), \bV_-(x) \subset \purple{\bR^n_{\rho}(x)}\] for the fiber $\purple{\bR^n_{\rho}}(x)$ of $\bR^n_{\rho}$ over each point $x$ of $\Uc M$
		invariant under the flow $\Phi_t$ of $\bR^n_{\rho}$ over 
		the geodesic flow $\phi_t$ of $\Uc M$. 
		\item[(ii)] $\bV_+(x), \bV_0(x)$ and $\bV_-(x) $ for 
		each $x \in \Uc M$ are independent subspaces, and their sum equals $\bR^n_{\rho}(x)$.
		\item[(iii)] For any fiberwise metric on $\bR^n_{\rho}$ over $\Uc M$, 
		the lifted action of $\Phi_{t}$ on $\bV_{+}$ (resp. $\bV_{-}$) is dilating (resp. contracting). 
		That is, for coefficients $A> 0, a> 0$, $A' > 0, a'>0$\/: 
		\begin{enumerate} 
			\item $\llrrV{\Phi_{-t}(\vv)}_{\bR^n_\rho, \phi_{-t}(m)} \leq 
A \exp(-a t)\llrrV{\vv}_{\bR^n_\rho, m}$ for $\vv \in \bV_+(m)$ as $t \ra \infty$. 
			\item $\llrrV{\Phi_{t}(\vv)}_{\bR^n_\rho, {\phi_{t}(m)}} \leq 
A \exp(-a t)\llrrV{\vv}_{\bR^n_\rho, m}$ for \green{$\vv \in \bV_-(m)$} as $t \ra \infty$. 
			\item (Dominance properties) 
			\begin{multline} \label{eqn:dominance} 
				\frac{\llrrV{\Phi_{t}(\vw)}_{\bR^n_\rho,\phi_{t}(m)}}{\llrrV{\Phi_{t}(\vv)}_{\bR^n_\rho, \phi_{t}(m)}} 
				\leq A' \exp(-a't)\frac{\llrrV{\vw}_{\bR^n_\rho, m}}{\llrrV{\vv}_{\bR^n_\rho, m}} \\
				\text{ for } \vv \in \bV_+(m), \vw \in \bV_0(m)
				\text{ as } t \ra \infty, \\
				 \text{ or for }
				 \vv \in \bV_0(m), \vw \in \bV_-(m)
				 \text{ as } t \ra \infty.
			\end{multline} 
		\end{enumerate} 
	\end{enumerate}
	We additionally require that $\dim \bV_+ = \dim \bV_- \geq 1$.  
	Here $\dim \bV_+$ is the {\em partial hyperbolicity index} of $\rho$
	\purple{for the associated bundle decomposition $\bV_+, \bV_0, \bV_-$ 
	and $\dim \bV_+ = \dim \bV_- < n/2$, so that $\dim \bV_0 \geq 1$. }
 Furthermore, $\bV_0$ is said to be the {\em neutral subbundle} of $\bR^n_{\rho}$. 
	(See Definition 1.5 of \cite{CP} or \cite{BLM}.)
We use the words ``in a bundle sense'' only as a convenience of this paper.)
\end{definition} 




The definition depends on the Riemannian metric, but not on $\llrrV{\cdot}_{\bR^n_{\rho}}$. If \purple{$G_M$} is word-hyperbolic, then we can replace $\Uc \hat M$ with a Gromov flow space which depends only on \green{$G_M$} up to quasi-isometry. (See Theorem \ref{thm:identify} and Gromov \cite{Gromov87} or Mineyev \cite{Mineyev}.)
\purple{Also, we leave as a question if Definition \ref{defn:phyp} forces 
$G_M$ to be word-hyperbolic.}


We say that $N$ has a \emph{partially hyperbolic linear holonomy group in 
a bundle sense} if
\begin{itemize}
\item there exists an FS submanifold $M$ in $N$ with convex boundary for a Riemannian metric on $N$, and
\item the linear part $\rho: \pi_1(N) = \Gamma_{M} \ra \GL(n, \bR)$ is a partially hyperbolic representation in a bundle sense over $M$. 
\end{itemize}

We denote by $a_1(A), \dots, a_n(A)$ the \green{nonincreasing} set of singular values of 
an $n\times n$-matrix $A$.
We denote by $\lambda_1(A), \dots, \lambda_n(A)$ the nonincreasing set of norms of 
eigenvalues of an $n\times n$-matrix $A$. (Here, the multiplicity is allowed.) 

Let $\log \lambda_1, \dots, \log \lambda_n$ denote the logarithmic eigenvalue functions defined on
the maximal \green{abelian} Lie algebra of $\mathfrak{gl}(n, \bR)$. 
For roots 
\[\theta=\left\{\log \lambda_{i_{1}}-\log \lambda_{i_{1}+1}, \dots, \log \lambda_{i_{m}}- \log \lambda_{i_{m}+1}\right\}, 
1 \leq i_{1}<\cdots<i_{m} \leq n-1,\] 
of $\GL(n, \bR)$, 
the parabolic
subgroup $P_{\theta}$ (resp. $\left.P_{\theta}^{-}\right)$ is 
the subgroup of block upper 
(resp. lower) triangular matrices in $\GL(n,\bR)$ with square diagonal blocks 
of sizes $i_{1}, i_{2}-i_{1}, \dots, i_{m}-i_{m-1}, n-i_{m}$ respectively. 
\red{We denote by $\theta^*$ the image of $\theta$ under the map $i \mapsto n-i$ for $i=1, \dots, n-1$.}

\purple{For a finitely generated group $G$, we denote by $w(g)$ the word length of $g$ in terms of a fixed finite generating set $S$ in $G$.}

\begin{theorem}\label{thm:main2} 
Let $N$ be an $m$-manifold with a finitely generated \purple{word-hyperbolic} fundamental group. 
	Let $\rho: \pi_1(N)\ra \GL(n, \bR)$ be a representation.
Assume one of the following: 
\begin{itemize} 
\item[(i)] \purple{$|\lambda_{i_g}(\rho(g))| =1 $} for all \green{$g \in \pi_1(N)$} where 
the index $i_g \in \{1, \dots, n\}$ may depend on $g$. 
\item[(ii)] \purple{For any sequence $g_i \in \pi_1(N)$, if $w(g_i) \ra \infty$, then $\frac{|\log a_{j_{g_i}}(\rho(g_i))|}{w(g_i)} \ra 0$ for some choices of 
the indices $j_{g_i}\in \{1, \dots, n\}$ depending on $g_i$.} 
\end{itemize} 

Then $\rho$ is $P$-Anosov for 
	any parabolic subgroup $P$ of $\GL(n, \bR)$ with 
	$P = P_\theta$ 
\red{for $\theta = \theta^*$}
containing $\log \lambda_k-\log \lambda_{k+1}, k \leq n/2$ 
	if and only if $\rho$ is partially hyperbolic of the index $k$ in a bundle sense. 
	In these cases $k < n/2$. 
\end{theorem}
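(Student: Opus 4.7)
The plan is to argue each direction separately, using Theorem \ref{thm:identify} to pass between the complete isometric geodesic unit-tangent bundle $\Uc \hat M$ and the abstract Gromov flow space once word-hyperbolicity of $\pi_1(N)$ is in hand.

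For partial hyperbolicity $\Rightarrow$ $P$-Anosov, the three-term splitting in Definition \ref{defn:phyp} is in particular a dominated splitting of the flat bundle $\bR^n_\rho$ over the geodesic flow at both index $k$ and index $n-k$. The Bochi--Potrie--Sambarino correspondence between dominated splittings and linear Cartan-projection gaps then yields
\[
\log a_k(\rho(\gamma)) - \log a_{k+1}(\rho(\gamma)) \gtrsim |\gamma|, \qquad \log a_{n-k}(\rho(\gamma)) - \log a_{n-k+1}(\rho(\gamma)) \gtrsim |\gamma|,
\]
while word-hyperbolicity of $\pi_1(N)$ is extracted from the genuine exponential expansion on $\bV_+$ and contraction on $\bV_-$ through a Morse-lemma argument on $\Uc \hat M$. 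This delivers $P_\theta$-Anosov for every $\theta$ built from the simple roots $\alpha_k$ and $\alpha_{n-k}$.

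For the converse, assume $\rho$ is $P_\theta$-Anosov with $\alpha_k \in \theta$. Then $\pi_1(N)$ is word hyperbolic by definition, and BPS supplies a continuous flow-invariant dominated splitting $E \oplus F$ of index $k$ on the Gromov flow space, which we transport to $\bR^n_\rho$ over $\Uc \hat M$ via Theorem \ref{thm:identify}. Applying the same machinery to the dual representation $\rho^\vee$ and re-dualizing produces a second dominated splitting $E' \oplus F'$ of $\bR^n_\rho$ with $\dim F' = k$; by the uniqueness of dominated splittings of a given index, these two refinements are compatible, so one may set $\bV_+ := E$, $\bV_- := F'$, and $\bV_0 := E' \cap F$ to obtain the requisite three-term decomposition with domination across adjacent blocks. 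The upgrade from mere domination to genuine dilation on $\bV_+$ and contraction on $\bV_-$ is then bootstrapped via an adapted-metric argument, using the eigenvalue or singular-value hypothesis to pin the middle spectrum.

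The principal obstacle, and the very reason for the dichotomous hypothesis on $\rho(\pi_1(N))$, is to guarantee that the neutral subbundle $\bV_0$ is nontrivial, so that $k < n/2$ and the three-part decomposition truly qualifies as partially hyperbolic in the sense of Definition \ref{defn:phyp} rather than degenerating to pure Anosov. If every $\rho(g)$ carries an eigenvalue of norm one, a Horn--Weyl inequality on the Jordan form forces at least one singular value of $\rho(g)$ to have subexponential behaviour along any orbit; the bounded-singular-value alternative yields the same subexponentiality directly. Combining this with the linear gap at position $k$ rules out $2k \ge n$, and the subexponential directions along each flow orbit must assemble --- by compactness of $\Uc M$ together with the uniqueness of the dominated splittings already produced --- into a continuous, flow-invariant subbundle of dimension $n-2k$ playing the role of $\bV_0$. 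Carrying out this assembly in the $C^0$ category, and deducing genuine expansion/contraction on $\bV_\pm$ from mere domination once the middle block is known to be spectrally bounded, constitute the main technical burden of the argument.
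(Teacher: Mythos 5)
Your proposal captures the overall architecture of the paper's argument — transport of the problem to the Gromov flow space via Theorem~\ref{thm:identify}, use of the Bochi--Potrie--Sambarino dominated splitting in both directions, refinement by combining the index-$k$ and index-$(n-k)$ splittings to obtain $\bV_\pm$ and $\bV_0$, and the role of the eigenvalue/singular-value hypothesis in forcing $k<n/2$. The converse direction (partial hyperbolicity implies Anosov) matches the paper almost exactly (BPS Proposition 4.5 plus Lemma~\ref{lem:Equiv}).

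However, there is a genuine gap at the crux of the forward direction, precisely where the paper does the real work. You write that the hypothesis forces some singular value of $\rho(g)$ to be subexponential, and then assert that ``combining this with the linear gap at position $k$ rules out $2k\ge n$, and the subexponential directions along each flow orbit must assemble \dots into a continuous, flow-invariant subbundle of dimension $n-2k$.'' But the subexponential index $i_g$ can a priori depend on $g$: for one group element it might sit at position $1$, for another at position $n$. Nothing in your sketch explains why all these indices are forced into the middle block $[k+1,n-k]$ uniformly, which is exactly what is needed to promote the $k$-domination gap $a_k/a_{k+1}\gtrsim e^{Aw(g)}$ to the absolute bounds $a_p(\rho(g))\gtrsim e^{Aw(g)}$ for $p\le k$ and $a_r(\rho(g))\lesssim e^{-Aw(g)}$ for $r\ge n-k+1$ required by partial hyperbolicity in the singular-value sense. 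The paper's Proposition~\ref{prop:PAnosovStrict} accomplishes this by passing to the Benoist cone $B$ of $\rho(G)$ inside the Weyl chamber of the Zariski closure, invoking the Breuillard--Sert joint-spectrum theorem to see that all asymptotic directions of the Cartan projections lie in $B$, and then running a combinatorial ``discrete continuity'' argument (attributed to Danciger and Stecker) on the set-valued function $\mathcal{I}(b)$ recording the zero-coordinates of $b\in B$: because $B$ is convex and the Anosov gap at $k$ forbids $k$ and $k+1$ from both lying in $\mathcal{I}(b)$ for any nonzero $b$, a chain of adjacent polyhedral ``generic flat domains'' in $B$ pins $\mathcal{I}(B\setminus\{0\})$ to one side of $k$, and the opposition involution then forces $\mathcal{I}(B\setminus\{0\})\subset[k+1,n-k]$. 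Your Horn--Weyl/adapted-metric sketch does not substitute for this; without some such global convexity/connectedness argument the ``assembly'' of the neutral directions is unjustified.

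A second, smaller omission: your argument tacitly works as if the Zariski closure of $\rho(\pi_1(N))$ were reductive. The paper must also treat the nonreductive case, which it does by passing to the semisimplification $\rho^{ss}$ (which inherits both the Anosov property and the eigenvalue-$1$ hypothesis), obtaining partial hyperbolicity for $\rho^{ss}$, and then transporting the expansion/contraction estimates back to $\rho$ by conjugating $\rho$ arbitrarily close to $\rho^{ss}$ and using $C^0$-convergence of the associated Anosov maps (Theorem 5.13 of \cite{GW12}). Your outline would need this limiting step as well.
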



\purple{We say that $\rho$ has {\em subexponential growth for some singular values} if (ii) holds.}  \purple{By Lemma 2.11 of \cite{BS2021} applied to the standard representation of 
$\GL(n, \bR)$, (ii) implies (i); however, we 
wrote out (ii) for the purpose of the usefulness of the result.
Also, the convergence in (ii) is uniform with respect to the word length since 
we can find a seqeunce not converging to $0$ otherwise.}

\red{It is easy to see that the conclusion is true for representations to 
$\SL_\pm(n, \bR)$ and for $k$ which is the least element of $\theta$ without our 
conditions.} \purple{If $\pi_1(N)$ is not simple, we can multiply any representation  
by a representation to $\bR^+$ so that a partial hyperbolicity cannot hold.}
We can ask if an Anosov representation is always partially hyperbolic up to multiplying by a positive scalar representation to $\bR$. We cannot answer this question since
controlling the singular values of representation elements is very hard in general, and
the only related results that we know of are due to Abel-Margulis-Soifer \cite{AMS95}, which is not applicable here
as far as we know. \red{See Kassel-Potrie \cite{KP25} for some discussions. 
However, only cases that are left are for representations 
whose singular values behave exponentially.}

\reD{The significance of this result is that we work with the full general linear group 
$\GL(n, \bR)$ instead of orthogonal groups. Also, Anosov representations correspond to only dominated representations and not actual partially hyperbolic flows as here. Promoting the dominated representation to Anosov ones is trivial for some Lie groups but not for general linear groups. }

\begin{corollary}\label{cor:affine}
Suppose that $N$ is a complete affine $n$-manifold with a finitely generated 
\purple{word-hyperbolic} fundamental group. 
Let $\rho:\pi_1(N) \ra \GL(n, \bR)$ be a linear part of the affine holonomy group. 
Then the conclusions of Theorem \ref{thm:main2} hold. 
\end{corollary}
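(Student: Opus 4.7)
The plan is to deduce Corollary \ref{cor:affine} directly from Theorem \ref{thm:main2} by verifying the first of the two alternative hypotheses there, namely that every element $\rho(g)$ admits an eigenvalue of norm $1$. The geometric input is standard for complete affine manifolds: the developing map identifies the universal cover $\tilde N$ with $\bR^n$, and $\pi_1(N)$ acts freely and properly discontinuously on $\bR^n$ by affine transformations whose linear parts are precisely the $\rho(g)$.

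Given any nontrivial $g \in \pi_1(N)$, I would write the corresponding affine deck transformation as $x \mapsto \rho(g)x + \tau(g)$ with translational part $\tau(g) \in \bR^n$. A fixed point would satisfy the linear equation $(\rho(g) - I)x = -\tau(g)$; since the $\pi_1(N)$-action is free, no such $x$ can exist, so $-\tau(g) \notin \IM(\rho(g) - I)$, which forces $\rho(g) - I$ to be non-surjective and hence singular. Therefore $1$ is an eigenvalue of $\rho(g)$, and the same is trivially true of the identity element. Choosing $i_g$ so that $\lambda_{i_g}(\rho(g)) = 1$ then verifies the first bullet of Theorem \ref{thm:main2} uniformly for all $g \in \pi_1(N)$.

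With this hypothesis in place, Theorem \ref{thm:main2} applies directly and delivers the asserted equivalence between $\rho$ being $P$-Anosov and $\rho$ being partially hyperbolic in the bundle sense of index $k$, which is the content of Corollary \ref{cor:affine}. A preliminary item to verify is that the partial hyperbolicity condition even makes sense, i.e., that an FS submanifold $M \subset N$ with smooth convex boundary exists for some Riemannian metric on $N$; this follows from the compact-core / Stallings discussion in the introduction, together with a smoothing of $\partial M$ into a convex hypersurface. Since the substantive analytic and representation-theoretic work already lives in Theorem \ref{thm:main2}, the corollary presents no serious obstacle; the only conceptual point worth emphasizing is that in the complete affine setting, the eigenvalue $1$ is forced algebraically by the freeness of the $\pi_1(N)$-action on $\bR^n$, which is precisely what Theorem \ref{thm:main2} was designed to exploit.
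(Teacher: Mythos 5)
Your proof is correct and follows essentially the same route as the paper. The paper simply cites its Theorem \ref{thm:HKS} (Hirsch--Kostant--Sullivan), whose proof is exactly the fixed-point argument you reproduce inline: a nontrivial affine deck transformation $x\mapsto\rho(g)x+\tau(g)$ has no fixed point by freeness, so $\rho(g)-\Idd$ cannot be surjective and hence $1$ is an eigenvalue of $\rho(g)$, verifying the first alternative hypothesis of Theorem \ref{thm:main2}.
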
 

In the field of affine manifolds, 
the hyperbolic flow was already discovered by Goldman and Labourie \cite{GL12} for $3$-dimensional Margulis space-times, and Ghosh \cite{Ghosh17}, Ghosh-Treib \cite{GhoshTreib21}, and Danciger-Zhang \cite{DZ2019} prove these properties for affine actions on higher-dimensional affine spaces. However, the linear parts are always assumed to be in the orthogonal groups or Hitchin, where the properties are more or less immediate.

We wish to approach the conjecture that a closed complete affine manifold cannot have a word-hyperbolic fundamental group. One approach considered is using Oseledets-type theorems (See Chapter 8 of \cite{KH95}). However, generalizing our results to measurable settings is currently hindered by numerous technical issues. Discussions on these matters took place with researchers such as Nicholas Tholozan at BIRS, Banff, during the winter of 2020.






\subsection{Some historical comments on complete affine manifolds} 
Originally, Milnor posed the question whether a free affine group of rank $\geq 2$ could act freely and properly discontinuously on affine space. Margulis provided an affirmative answer by constructing examples in the affine $3$-space, where the linear parts belong to $\SO(2, 1)$. Drumm and Goldman subsequently explained this result using fundamental domains in a series of papers (see \cite{DDGS} for a survey). Sullivan \cite{Sullivan76} observed that the linear part of each affine group element must have $1$ as an eigenvalue and proved the Chern conjecture for complete affine manifolds \cite{KS75}. Klingler \cite{Klingler2017} later extended this to closed special affine manifolds.

Further developments include Ghosh \cite{Ghosh17,Ghosh18,ghosh2018avatars,Ghosh23}, Ghosh-Treib \cite{GhoshTreib21}, and Smilga \cite{Smilga14,Smilga16,Smilga18,Smilga22,Smilga22ii}, who generalized these results and produced higher-dimensional examples of proper and free affine actions. Abels, Margulis, and Soifer also contributed to related topics such as the Auslander conjecture (see Abels \cite{Abels01} for a survey). Danciger and Zhang \cite{DZ2019} investigated affine deformations of Hitchin representations, showing that they do not exist. Danciger, Kassel, and Gu\'eritaud \cite{DGK2020} provided \purple{the} first examples of fundamental groups of closed manifolds acting properly and freely on affine spaces.

Although these works are related to the themes of this paper, their specific aims differ. This summary only scratches the surface of this rapidly evolving field.

\subsection{Outline}

In Section \ref{sec:preliminary-p1}, we will introduce  affine structures and 
some basic facts on $\delta$-hyperbolic metric spaces. 
Suppose that $\Gamma_M$ is word-hyperbolic.
Then $\hat M$ has a Gromov hyperbolic Riemannian metric
by the Svarc-Milnor theorem. 
(See \cite{Milnor} and \cite{Svarc}. 
\purple{\v{S}varc is a transliteration of Albert Schwarz's name.
He published this theorem in Russian prior to Milnor. Currently, he is 
a professor at the University of California, Davis.)}
We discuss the ideal boundary of $\hat M$, identifiable with the Gromov boundary, and the complete isometric geodesics in $\hat M$. 
We relate the space of complete isometric geodesics to
the Gromov flow space. 

In Section \ref{sec:phyp}, 
we provide a review of the definition of the $P$-Anosov property of 
Guichard-Wienhard \cite{GW12}. For convenience for reductive groups, we will use 
the approaches of G\'ueritaud-Guichard-Kassel-Wienhard \cite{GGKW17ii} and Bochi-Potrie-Sambarino \cite{BPS}
using singular values of the linear holonomy group. (The theory of 
Kapovich, Leeb, and Porti \cite{KL18}, \cite{KLP18iii} is equivalent to these, but it is not so
tailored for our purposes here.)
We give various associated definitions. 

Section \ref{sec:PAnosov} has the main arguments. 
In Section \ref{sub:kconvexity}, we will relate the partial hyperbolic property in the singular-value sense to that in the bundle sense. 
In Section \ref{sub:converse}, we show that the linear part of 
the holonomy representation of a complete affine manifold 
is $P$-Anosov if and only if it is partially hyperbolic. 
First, we do this when the linear holonomy group
has a reductive Zariski closure. 
\purple{We use the fact that $1$ has to be an eigenvalue of each element 
or the subexponential growth of some singular values of
the linear holonomy groups}. We relate it
to singular values using the clever ideas of Danciger and Stecker,
the spectrum theory of Breuillard-Sert \cite{BS2021}, Benoist \cite{Benoist97}, \cite{Benoist98}, and the work of Potrie-Kassel \cite{KP22}.
We could generalize to the cases of nonreductive linear holonomy groups using 
the stability of the Anosov and partial hyperbolic properties. 
This will prove Theorem  \ref{thm:main2}. 

\purple{In Section \ref{sec:affine}, we apply these results to complete affine manifolds.  
proving Corollary \ref{cor:affine}.}

\subsection{Acknowledgments}
We thank Michael Kapovich for various help with geometric group theory and coarse geometry. This article began with some discussions with Michael Kapovich during the conference honoring the 60th birthday of William Goldman at the University of Maryland, College Park, in 2016. 
We thank Jeffrey Danciger and Florian Stecker for help with the $P$-Anosov properties, which yielded the main idea for the proof of Theorem \ref{thm:main2}. 
%
%
We also thank Herbert Abels, Richard Canary, Virginie Charette, Todd Drumm,
William Goldman, Fran\c{c}ois Gu\'eritaud, Fanny Kassel, 
Andr\'es Sambarino, Nicholas Tholozan, and Konstantinos Tsouvalas for various discussions helpful to this paper. 
We also thank BIRS, Banff, Canada, and KIAS, Seoul, where some of this work was done.
\reD{This paper is a joint work with Kapovich in the beginning and Danciger and Stecker later. Unfortunately, they did not wish to join as coauthors for various reasons.} 
\purple{Also, we used 
OpenAI to assist us with writing and Google Gemini to find related results and papers.}

\section{ Preliminary} \label{sec:preliminary-p1} 


\reD{We recall some standard well-known facts in this section to set the notations and so on. 
Also, we need a slight modification of the theories for our purposes. Most of the background on geometric groups and coarse geometry is from the comprehensive textbook by 
Dru\c{t}u and Kapovich \cite{DK2018}.}

The other purpose is to establish the Anosov properties for manifolds using coarse isometries since most of the literature works \green{with groups}.

\subsection{Convergences of geodesics} \label{sub:geoconv}  
Let $M$ be a manifold of dimension $n$ with a regular cover $\hat{M}$ and the deck transformation group $\Gamma_M$. We assume that $M$ has a Riemannian metric with convex boundary.

We say that a sequence of isometric rays (resp. complete isometric geodesics) $l_i$ \emph{converges} to an isometric ray (resp. complete geodesic) $l$ (denoted as $l_i \ra l$) if $l_i(t) \ra l(t)$ for each $t \in [0, \infty)$ (resp. $t \in \bR$).

By the Arzel\`a-Ascoli theorem and the properness of $\hat{M}$, any sequence of isometric rays (resp. complete isometric geodesics) converges to an isometric ray (resp. complete isometric geodesic) if the sequence of pairs of their $0$-points and directions at the $0$-points converges in $\Uu \hat{M}$. (See Section 11.11 of \cite{DK2018}.)

\subsection{Gromov hyperbolic spaces} \label{sub:Gromov} 


 Let us recall some definitions: 
A metric space is {\em geodesic} if every pair of points are connected by a geodesic, i.e., 
a path that is an isometry from an interval to the metric space.  
We will only work with geodesic metric spaces.
 Let $(X, d_X)$ and $(Y, d_Y)$ be metric spaces. A map $f: X \ra Y$ is called 
{\em  $(L, C')$-coarse Lipschitz} for $L, C > 0$ if 
 \[\green{d_Y(f(x), f(x'))} \leq L d_X(x, x') + C' \hbox{ for all } x, x' \in X.\] 
 A map $f: X \ra Y$ is an \green{{\em $(L, C)$-quasi-isometric embedding}} if 
 $f$ satisfies 
 \[L^{-1} d_X(x, x') - C \leq d_Y(f(x), f(x')) \leq L d_X(x, x')  + C \hbox{ for all }x, x'\in X.\] 
 A map $\bar f: Y\ra X$ is a {\em $C$-coarse inverse} of $f$ 
 for $C > 0$ if 
\[d_X(\bar f \circ f, \Idd_X) \leq C, \hbox{ and }
d_Y(f\circ \bar f, \Idd_Y)\leq C.\]  
A map $f:X \ra Y$ between metric spaces is called 
 a {\em quasi-isometry} if it is a coarse Lipschitz map \purple{that} admits a 
 coarse Lipschitz coarse inverse map.
(Note that these are not necessarily continuous, as we follow 
Dru\c{t}u-Kapovich \cite{DK2018}.) 

\begin{lemma} \label{lem:piUUM} 
Let $M$ be a compact Riemannian manifold with possibly nonempty boundary. 
	$\pi_{\Uu \hat M}$ is a quasi-isometry.
	\end{lemma}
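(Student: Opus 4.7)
The plan is to show $\pi_{\Uu \hat M}$ is a $(1,D)$-quasi-isometry, where $D$ is a uniform upper bound on fiber diameters. This is the standard Riemannian-submersion-with-compact-fibers argument, transported to the cover via $\Gamma_M$-equivariance of the Sasaki-type metric on $\Uu\hat M$.

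First I would record the $1$-Lipschitz inequality. The natural Riemannian metric on $\Uu M$ (the Sasaki metric, say) makes $\pi_{\Uu M}$ a Riemannian submersion over $M$, and this structure pulls back to a Riemannian submersion $\pi_{\Uu\hat M}:\Uu\hat M \to \hat M$. Since Riemannian submersions are distance-decreasing,
$$d_{\hat M}\bigl(\pi_{\Uu\hat M}(\tilde x),\pi_{\Uu\hat M}(\tilde y)\bigr) \leq d_{\Uu\hat M}(\tilde x,\tilde y) \quad \text{for all } \tilde x,\tilde y\in\Uu\hat M.$$
For the fiber-diameter bound, each fiber $\pi_{\Uu M}^{-1}(x)$ is a unit sphere in $T_xM$ with the induced smoothly varying metric, so by compactness of $M$ there is a uniform constant $D<\infty$ bounding its intrinsic diameter, hence also its diameter as a subspace of $\Uu M$. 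Since $\Gamma_M$ acts on $\Uu\hat M$ by isometries and permutes fibers of $\pi_{\Uu\hat M}$ over $\Gamma_M$-orbits of $\hat M$, each fiber of $\pi_{\Uu\hat M}$ is isometric to some fiber of $\pi_{\Uu M}$, and the same bound $D$ applies.

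For the reverse inequality, given $\tilde x,\tilde y\in\Uu\hat M$ with projections $p,q\in\hat M$, and given $\varepsilon>0$, I would take a smooth curve $\alpha$ in $\hat M$ from $p$ to $q$ of length at most $d_{\hat M}(p,q)+\varepsilon$, and horizontally lift it (via the Levi-Civita connection on $\hat M$) to a curve $\tilde\alpha$ in $\Uu\hat M$ with $\tilde\alpha(0)=\tilde x$. In the Sasaki metric, horizontal lifts preserve length, so $d_{\Uu\hat M}(\tilde x,\tilde\alpha(1))\leq d_{\hat M}(p,q)+\varepsilon$. The endpoint $\tilde\alpha(1)$ lies in $\pi_{\Uu\hat M}^{-1}(q)$, hence within $d_{\Uu\hat M}$-distance $D$ of $\tilde y$. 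The triangle inequality then gives
$$d_{\Uu\hat M}(\tilde x,\tilde y)\leq d_{\hat M}(p,q)+D+\varepsilon,$$
and taking $\varepsilon\to 0$ yields the claimed upper bound. A coarse inverse is any set-theoretic section $s:\hat M\to\Uu\hat M$ of $\pi_{\Uu\hat M}$: continuity is unnecessary in the Dru\c{t}u--Kapovich framework, and one has $\pi_{\Uu\hat M}\circ s=\mathrm{id}$ and $d_{\Uu\hat M}(s\circ\pi_{\Uu\hat M},\mathrm{id})\leq D$ by the fiber bound. Combined with the Lipschitz inequalities this establishes the quasi-isometry.

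I do not expect a serious obstacle. The only mild issue is the behavior of the near-geodesics $\alpha$ and their horizontal lifts near $\partial\hat M$; the convexity and smoothness of $\partial M$ (and hence of $\partial\hat M$) ensure that near-minimizing paths can be chosen inside $\hat M$ and that the horizontal lift, being a curve in the sphere bundle over $\hat M$, remains in $\Uu\hat M$. If convenient, $\alpha$ may be approximated by smooth curves missing $\partial\hat M$ except at endpoints, so that the horizontal lift is unambiguous.
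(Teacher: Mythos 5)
Your proof is correct and rests on the same core observation the paper uses — uniform boundedness of the fibers — just spelled out in full (the $1$-Lipschitz submersion inequality, horizontal lifts, and a set-theoretic section as coarse inverse). The paper's proof is the single sentence ``Each fiber is uniformly bounded under $d_{\Uu \hat M}$,'' so you have simply supplied the standard details behind that remark.
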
 
\begin{proof}
	Each fiber is uniformly bounded under $d_{\Uu \hat M}$.  
	\end{proof}

A metric space $(X, d)$ is {\em proper} if every metric ball has a compact closure. 
In other words, $d_p(\cdot) = d(p, \cdot)$ is a proper function $X \ra \bR$. Clearly,
a complete Riemannian manifold is a proper metric space.
We denote by $B^d(x, R)$ the ball of radius $< R$ in $X$ with the center $x \in X$. 

Let $X$ be a geodesic metric space with metric $d$. 
A {\em geodesic triangle $T$} is a concatenation of three geodesics $\tau_1, \tau_2, \tau_3$ where the indices are modulo $3$. 
The {\em thinness radius} of a geodesic triangle $T$ is the number 
\[ \delta(T) := \max_{j=1,2,3} \left( \sup_{p \in \tau_j} d(p, \tau_{j+1}\cup \tau_{j+2}) \right). \]

A geodesic metric space $X$ is called {\em $\delta$-hyperbolic} in the sense of Rips if 
every geodesic triangle $T$ in $X$ is $\delta$-thin. 

By Corollary 11.29 of \cite{DK2018} as proved by Section 6.3C of 
Gromov \cite{Gromov87}, 
the Gromov hyperbolicity is equivalent to the Rips hyperbolicity
for geodesic metric spaces.
We will use these concepts interchangeably.

We will assume that $\Gamma_M$ is word-hyperbolic for our discussions below. 
This is equivalent to assuming that
$\hat M$ is Gromov hyperbolic by the Svarc-Milnor lemma.

Two isometric rays $\rho_1$ and $\rho_2$ of $X$ are {\em equivalent} if 
$t\mapsto d(\rho_1(t), \rho_2(t))$ is bounded.  
This condition is equivalent to the one that their Hausdorff distance under $d$ is bounded. 
Assume that $X$ is $\delta$-hyperbolic. $X$ has a well-defined ideal boundary 
$\partial_\infty X$ as in Definition 3.78 of \cite{DK2018}; 
that is, the space of the equivalence classes of isometric rays. 
 (See \cite{CDP90} and \cite{GdH90}.) 
 We denote by $\partial^{(2)}_\infty X = \partial_\infty X \times \partial_\infty X \setminus \Delta$ 
 where $\Delta$ is the diagonal set. 

The constant $C$ below is called a {\em quasi-geodesic} constant. 
 \begin{lemma} \label{lem:twogeo} 
	Let $M$ be a compact manifold with 
	the induced path metric $d_{\hat M}$ on $\hat M$
	from a Riemannian metric of $M$.
	Let $\Gamma_M$ be the deck transformation group of $\hat M \ra M$.  
	Suppose that $\Gamma_M$ is word-hyperbolic. 

Then there exists a constant $C >0$ so that
for every pair of
	complete isometric geodesics $l_1$ and $l_2$ in $\hat M$
with the identical set of endpoints in $\partial_\infty \hat M$, the Hausdorff distance between 
$l_1$ and $l_2$ under $d$ is bounded above by $C$. 
\end{lemma}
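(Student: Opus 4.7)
The plan is to reduce the assertion to the well-known stability of bi-infinite geodesics with common endpoints at infinity in a Gromov hyperbolic geodesic space, and then to install this hypothesis on $\hat M$ via the Svarc-Milnor lemma.

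First I would observe that, since $M$ is a compact Riemannian manifold with convex boundary, $(\hat M, d_{\hat M})$ is a proper geodesic metric space on which $\Gamma_M$ acts properly discontinuously, cocompactly, and by isometries. By the Svarc-Milnor lemma, $(\hat M, d_{\hat M})$ is quasi-isometric to $\Gamma_M$ equipped with any word metric. Because $\Gamma_M$ is word-hyperbolic, and Gromov hyperbolicity is a quasi-isometry invariant among geodesic metric spaces, $\hat M$ is $\delta$-hyperbolic for some $\delta \geq 0$, with a well-defined ideal boundary $\partial_\infty \hat M$ canonically identified with the Gromov boundary of $\Gamma_M$.

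Next I would invoke the following standard fact in a proper $\delta$-hyperbolic geodesic space $X$: there exists a constant $K = K(\delta)$ such that for every $p \in X$ and every pair of distinct ideal points $\xi_-, \xi_+ \in \partial_\infty X$, any bi-infinite geodesic $l$ joining $\xi_-$ to $\xi_+$ satisfies
\[
\bigl|(\xi_- \,|\, \xi_+)_p \;-\; d(p, l)\bigr| \;\leq\; K,
\]
where $(\cdot \,|\, \cdot)_p$ denotes the Gromov product extended continuously to the boundary. This is obtained by approximating $l$ by finite subsegments $[l(-T), l(T)]$, applying the $\delta$-thin triangle condition to the geodesic triangles with vertices $p, l(-T), l(T)$, and passing to the limit $T \to \infty$ using properness; a clean reference is Chapter 11 of \cite{DK2018}, together with the standard comparison between the Rips and Gromov definitions already noted in the preceding subsection.

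Finally, I would apply the fact to our two complete isometric geodesics $l_1$ and $l_2$ sharing endpoints $\xi_-, \xi_+ \in \partial_\infty \hat M$. For each $p \in l_1$ we have $d(p, l_1) = 0$, hence $(\xi_- \,|\, \xi_+)_p \leq K$; applying the inequality to $l_2$ then yields $d(p, l_2) \leq (\xi_- \,|\, \xi_+)_p + K \leq 2K$. By symmetry, $d(q, l_1) \leq 2K$ for every $q \in l_2$, so the Hausdorff distance between $l_1$ and $l_2$ is at most $2K$, and one sets $C := 2K(\delta)$. The only delicate point, and the main technical obstacle, is the continuous extension of the Gromov product to $\partial_\infty \hat M$ and the limit argument in the displayed inequality: both rest on the $\delta$-thin triangle condition, but care is required when the vertices are allowed to escape to the ideal boundary. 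Since this is entirely standard in coarse geometry, no new ingredient beyond $\delta$-hyperbolicity is needed.
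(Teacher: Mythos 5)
Your proof is correct, but it takes a genuinely different route from the paper. The paper observes that complete isometric geodesics are trivially $(1,0)$-quasi-geodesics and then cites the stability/Morse lemma for quasi-geodesics (Proposition 3.1 of Chapter 3 of \cite{CDP90}), which directly says that two quasi-geodesics with the same endpoints at infinity in a hyperbolic space have uniformly bounded Hausdorff distance. You instead establish the intermediate estimate $\bigl|(\xi_-\,|\,\xi_+)_p - d(p,l)\bigr| \leq K(\delta)$, valid for any bi-infinite geodesic $l$ with the given ideal endpoints, and then apply it twice to trade $d(p,l_1)=0$ for $d(p,l_2)\leq 2K$. Both arguments rest on the same thin-triangle mechanism, but yours is more self-contained and yields the slightly stronger quantitative fact that the constant is controlled through the Gromov product, at the cost of having to justify the boundary extension of the Gromov product (which, as you flag, is only well-defined up to an additive $2\delta$ and is obtained via a limiting argument over finite subsegments using properness). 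The paper's one-line citation is cleaner but less informative about where the constant comes from. One minor quibble: you phrase the boundary Gromov product as "extended continuously," which is not literally accurate -- the extension is canonical only up to a bounded error -- but since the whole argument is coarse, this does not affect the conclusion.
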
 
\begin{proof}
	Since $l_1$ and $l_2$ are both $(1, 0)$-quasi-geodesics, 
	this follows from Proposition 3.1 of Chapter 3 of \cite{CDP90}. 
\end{proof}

 	We put on $\hat M \cup \partial_\infty \hat M$ the 
 shadow topology, which is a first-countable Hausdorff topology
 by Lemma 11.76 of \cite{DK2018}. It is also compact according to Section 11.11 of \cite{DK2018}. 
 Since it is first countable, 
 we do not need to consider nets on the space but only sequences
 to understand the continuity of the real-valued functions. 

We denote by $\partial_G X$ the Gromov boundary of 
a $\delta$-hyperbolic geodesic metric space $X$. (See Section 11.12 of \cite{DK2018}.)
 By Theorem 11.104 of \cite{DK2018}, 
 there is a homeomorphism $h: \partial_\infty X
 \ra \partial_G X$ given 
 by sending the equivalence class $[\rho]$ of an isometric ray $\rho$
 to the equivalence class of $\{\rho(n)\}_{n \in \bZ_+}$. 
 We will identify these two spaces using this map. 

Let $\hat{\Gamma}_M$ denote the Cayley graph of $\Gamma_M$. 
$\hat{\Gamma}_M$ and $\hat M$ are quasi-isometric by 
the Svarc-Milnor lemma again. 
The Gromov boundary $\partial_\infty \hat M$ of $\hat M$ and 
can be identified 
with the boundary $\partial_\infty \Gamma$ of
$\hat{\Gamma}_M$ with the word metric
by Theorem 11.108 of \cite{DK2018}. 

For any Gromov hyperbolic geodesic metric space $X$, 
the set of $GX$ of complete isometric geodesics has a metric: 
for $g, h \in GX$, we 
define the metric $d_{GX}$ given by 
\begin{equation}\label{eqn:dGX} 
 d_{GX}(g, h) := \int^\infty_{-\infty} d_X(g(t), h(t)) 2^{-|t|} dt.    
\end{equation}
Notice that \purple{$d_X(g(t),h(t)) \leq 2|t| + C$ for a constant $C$}
since we can follow the geodesic $g$ backward and 
then forward in $h$ to obtain a distance $2|t|$. Thus, the above integral is always 
well defined. 
(See Gromov \cite{Gromov87}.)
%
%
Let $GX$ have this metric topology. 
(See Section 11.11 of \cite{DK2018}.)

\begin{lemma} \label{lem:identify} 
	Suppose that $\Gamma_M$ is Gromov hyperbolic. 
	Then there is a quasi-isometric homeomorphism $\mathcal{F}:\Uc \hat M \ra G\hat M$
	by taking a unit vector $\vec{u}$ at $x$ to a complete isometric geodesic 
	$\bR \ra \hat M$ passing $x$ tangent to $\vec{u}$. 
	\end{lemma}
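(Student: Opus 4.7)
The plan is to exhibit $\mathcal{F}$ as a $\Gamma_M$-equivariant bijection, upgrade it to a homeomorphism using the compactness of the quotient $\Uc M$, and then promote it to a quasi-isometry by sandwiching both $\Uc \hat M$ and $G\hat M$ over $\hat M$ via natural basepoint projections. The first step is to note that by uniqueness of solutions to the geodesic ODE, a complete isometric geodesic $g:\bR\to\hat M$ is entirely determined by its initial data $(g(0),g'(0))$, and by the very definition of $\Uc\hat M$ every element of $\Uc\hat M$ arises in this way. Thus $\mathcal{F}$ is a $\Gamma_M$-equivariant bijection with inverse $g\mapsto(g(0),g'(0))$.

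Next the plan is to verify continuity in both directions. If $(x_i,\vec u_i)\to (x,\vec u)$ in $\Uc\hat M$, the Arzel\`a-Ascoli argument recalled in Section \ref{sub:geoconv} yields pointwise convergence of the associated geodesics $g_i\to g$. Since $d_{\hat M}(g_i(t),g(t))\le d_{\hat M}(g_i(0),g(0))+2|t|$ is dominated by an integrable function against the weight $2^{-|t|}$ appearing in \eqref{eqn:dGX}, dominated convergence gives $d_{G\hat M}(g_i,g)\to 0$. For the reverse direction, convergence $g_i\to g$ in $d_{G\hat M}$ forces a subsequence to converge pointwise almost everywhere; the $1$-Lipschitz property of isometric geodesics propagates this to every $t$, and together with compactness of the tangent sphere over a relatively compact set of basepoints, this will force $(g_i(0),g_i'(0))\to(g(0),g'(0))$ in $\Uc\hat M$ along a subsequence, which suffices in a metric space.

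To promote $\mathcal{F}$ to a quasi-isometry I would introduce the basepoint projection $\pi_0:G\hat M\to\hat M$, $g\mapsto g(0)$, which is $\Gamma_M$-equivariant and satisfies $\pi_{\Uu\hat M}=\pi_0\circ\mathcal{F}$. The upper bound $d_{\hat M}(g(0),h(0))\le L\,d_{G\hat M}(g,h)+C$ comes from restricting the integral in \eqref{eqn:dGX} to $|t|\le 1/2$, where $d_{\hat M}(g(t),h(t))\ge d_{\hat M}(g(0),h(0))-2|t|$; the reverse bound $d_{G\hat M}(g,h)\le L'\,d_{\hat M}(g(0),h(0))+C'$ follows from the pointwise estimate $d_{\hat M}(g(t),h(t))\le d_{\hat M}(g(0),h(0))+2|t|$ together with the integrability of $2^{-|t|}(1+|t|)$. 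So $\pi_0$ is a quasi-isometry, and combined with Lemma \ref{lem:piUUM} for $\pi_{\Uu\hat M}$ this forces $\mathcal{F}$ to be a quasi-isometry.

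The hard part will be the quasi-isometry claim, because $d_{G\hat M}$ is not a length metric and the Svarc-Milnor lemma cannot be invoked verbatim. The workaround above sidesteps this by reducing everything to the two elementary metric comparisons between $d_{G\hat M}$ and the pullback of $d_{\hat M}$ under $\pi_0$. An alternative route, which I would keep in reserve, is to use equivariance of $\mathcal{F}$ to descend to the compact quotient $\Uc M\to G\hat M/\Gamma_M$, where continuity together with bijectivity between compact Hausdorff spaces automatically upgrades to a homeomorphism, after which the cocompact proper $\Gamma_M$-actions on both covers produce the desired quasi-isometry by a standard equivariance argument.
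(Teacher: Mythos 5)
Your proof is correct and follows the same overall strategy as the paper's: exhibit $\mathcal{F}$ as a bijection via the geodesic ODE, establish continuity in both directions from the defining integral \eqref{eqn:dGX}, and promote to a quasi-isometry by factoring through the basepoint projection $\pi_0: G\hat M \to \hat M$, $g \mapsto g(0)$, using the identity $\pi_{\Uu\hat M}|_{\Uc\hat M} = \pi_0\circ\mathcal{F}$. Where you genuinely diverge from the paper is in the quasi-isometry step: the paper cites Section III of Matheus \cite{Matheus90} for the fact that $\pi_0$ is a quasi-isometry, whereas you prove the two-sided coarse Lipschitz estimate directly by integrating the elementary triangle-inequality bounds $|d_{\hat M}(g(0),h(0)) - 2|t|| \le d_{\hat M}(g(t),h(t)) \le d_{\hat M}(g(0),h(0)) + 2|t|$ against the weight $2^{-|t|}$ (truncating to $|t|\le 1/2$ for the lower bound). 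This makes the argument self-contained and quantitative. Your continuity arguments also differ in technique — dominated convergence for $\mathcal{F}$, and $L^1$-convergence-implies-a.e.-convergence plus the $1$-Lipschitz propagation for $\mathcal{F}^{-1}$ — but are equivalent in spirit to the paper's cut-off estimates and ``positive lower bound for the integral'' argument.

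Two small points to tighten. First, your estimates show $\pi_0$ is a quasi-isometric \emph{embedding}; coarse surjectivity (needed to call it a quasi-isometry on its own, although not strictly needed for the sandwich argument since $\mathcal{F}$ is a bijection) follows from the $\Gamma_M$-equivariance of $\pi_0$ together with cocompactness of the $\Gamma_M$-action on $\hat M$, which forces the image $\pi_0(G\hat M)$ to be coarsely dense once $G\hat M \ne \emptyset$. Second, in the reverse continuity direction you say the subsequential convergence ``suffices in a metric space''; it would be cleaner to spell out the standard observation that every subsequence of $(g_i)$ admits a further subsequence converging to $(g(0),g'(0))$, hence the full sequence converges.
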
 
\begin{proof}
There is a map $G\hat{M} \ra \Uc \hat{M}$ given by sending \purple{each} complete isometric geodesic $g: \bR \ra \hat{M}$ to $(g(0), \vec{v}_0)$ where $\vec{v}_0$ is the unit tangent vector at $g(0)$. This map is clearly bijective. In Section III of \cite{Matheus90}, the map $G\hat{M} \ra \hat{M}$ defined by $g \mapsto g(0)$ is shown to be a quasi-isometry. Therefore, post-composing this map with a lift that sends each $g(0)$ to a vector in the fiber of $\Uc \hat{M}$ over $g(0)$ results in a quasi-isometry. For a sequence $\{g_i\}$ of geodesics in $\hat{M}$, if $d_{GX}(g, g_i) \to 0$, then $(g_i(0), g_i'(0)) \to (g(0), g'(0))$ since otherwise, we would obtain a positive lower bound for the integral \eqref{eqn:dGX}.

The inverse map $\Uc \hat{M} \ra G\hat{M}$ is also continuous due to the continuity of the exponential map and considerations involving \eqref{eqn:dGX}, where $d_X(g(t), h(t))$ grows sublinearly. The continuity of the integral values under $g$ and $h$ can be shown by cutting off for $|t| > N$ where $N$ is independent of $g$ or $h$ by \eqref{eqn:dGX} and an estimate.

	
	\end{proof}

%


\subsection{Flow space $G\hat M$ quasi-isometric to $\partial_\infty^{(2)}\hat M \times \bR$}

We assume that $\Gamma_M$ is word-hyperbolic. 
For a word-hyperbolic group $\Gamma$, we let $\hat \Gamma$ denote its Cayley graph.
For simplicity, we denote by $\partial_\infty \Gamma$ the boundary 
$\partial_\infty \hat \Gamma$. 
We denote \[\partial_\infty^{(2)}\Gamma
:=\partial_\infty\Gamma\times\partial_\infty \Gamma \setminus \{(t,t)\mid t\in\partial_\infty \Gamma\}.\]
We denote by $F\Gamma_M$ the Gromov 
geodesic flow space of $\hat{\Gamma}_M$ obtained by the following proposition. 
(Note that we identified $\partial_\infty \hat M$ with $\partial_\infty \Gamma$ in
Section \ref{sub:Gromov}.)

\begin{theorem}[Theorem 8.3.C of Gromov \cite{Gromov83}, Theorem 60 of Mineyev \cite{Mineyev}] 
	\label{thm:Mineyev} Let $\Gamma$ be a finitely generated word-hyperbolic group. 

Then there exists a proper hyperbolic metric space 
	$F\Gamma$ with the following properties\/{\em :}
	\begin{itemize} 
		\item[(i)] $\Gamma\times\bR\times\bZ/2\bZ$ acts on $F\Gamma$.
		\item[(ii)] The $\Gamma\times \bZ/2\bZ$-action is isometric.
		\item[(iii)] Every orbit $\Gamma\ra F\Gamma$ is a quasi-isometry. 
		\item[(iv)] The $\bR$-action is free, and every orbit $\bR \ra F\Gamma$ is a quasi-isometric embedding. 
		The induced map $F\Gamma/\bR \ra \partial_\infty^{(2)}\Gamma$ is a homeomorphism.
	\end{itemize}
\end{theorem}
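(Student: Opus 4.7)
The plan is to construct $F\Gamma$ as a metric realization of the space of parameterized biinfinite geodesics in $\hat\Gamma$, with an equivariant parameterization making the natural $\Gamma$-action isometric. As a topological space $F\Gamma$ will be homeomorphic to $\partial_\infty^{(2)}\Gamma \times \bR$; the content of the theorem is to equip this space with a proper hyperbolic metric realizing properties (i)--(iv).

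First I would fix a $\Gamma$-invariant bicombing of the Cayley graph $\hat\Gamma$: an assignment, to each pair $(\xi_-,\xi_+) \in \partial_\infty^{(2)}\Gamma$, of a quasi-geodesic line with those endpoints, well defined up to uniform Hausdorff distance by hyperbolicity of $\hat\Gamma$. The key technical step is then to produce a $\Gamma$-equivariant, H\"older-continuous ``time origin'' on $\partial_\infty^{(2)}\Gamma$---equivalently, a Busemann-type cocycle $c:\Gamma \times \partial_\infty^{(2)}\Gamma \ra \bR$ satisfying the cocycle identity $c(\gamma_1\gamma_2,x) = c(\gamma_1,\gamma_2 x) + c(\gamma_2,x)$ together with the anti-symmetry $c(\gamma,(\xi_-,\xi_+)) = -c(\gamma,(\xi_+,\xi_-))$. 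Following Mineyev, I would build this cocycle by a homological averaging of the distance function on $\hat\Gamma$, smoothed through a convolution with bounded support; this produces the twisted action $\gamma\cdot(\xi_-,\xi_+,t) := (\gamma\xi_-,\gamma\xi_+,t+c(\gamma,(\xi_-,\xi_+)))$ as a candidate isometric $\Gamma$-action.

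With the cocycle in hand I would define the metric $d_{F\Gamma}$ by an integral analogous to \eqref{eqn:dGX}: for two geodesic lines $g,h$ realized with their equivariant parameterizations, set
\[ d_{F\Gamma}(g,h) := \int_{-\infty}^{\infty} d_{\hat\Gamma}(g(t),h(t))\, e^{-|t|}\, dt. \]
The properties then follow quickly: (i) the product action of $\Gamma \times \bR \times \bZ/2\bZ$ is built in, with $\bR$ acting by translation in $t$ and $\bZ/2\bZ$ swapping endpoints and reversing $t$; (ii) $\Gamma \times \bZ/2\bZ$ acts isometrically directly from the cocycle identities and the symmetry of $d_{\hat\Gamma}$; (iii) each orbit map $\Gamma \ra F\Gamma$ is a quasi-isometry because a single $\Gamma$-orbit is coarsely dense in the chosen quasi-geodesic line, combined with the \v{S}varc--Milnor lemma; (iv) the $\bR$-action is free with each orbit a quasi-isometric embedding by construction, and $F\Gamma/\bR \cong \partial_\infty^{(2)}\Gamma$ tautologically.

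The main obstacle is the construction of the H\"older-continuous equivariant cocycle in the absence of a CAT$(-1)$ ambient structure---this is precisely the technical heart of Mineyev's paper and cannot be bypassed by soft arguments, since the naive Gromov product based at a vertex is merely coarsely a cocycle and fails to be continuous on $\partial_\infty^{(2)}\Gamma$. A secondary difficulty is ensuring genuine (rather than merely coarse) Gromov hyperbolicity and properness of $F\Gamma$: properness follows from properness of $\hat\Gamma$ together with a compactness argument on parameterized geodesics passing through a bounded set, while hyperbolicity passes through property (iii) by quasi-isometry invariance of Gromov hyperbolicity, after one verifies that $d_{F\Gamma}$ is genuinely a geodesic metric (typically by a further smoothing, or by replacing the integral formula with the associated path metric).
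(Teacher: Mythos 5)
This theorem is stated as a citation (to Gromov's essay and to Mineyev's paper ``Flows and joins of metric spaces'') and the paper supplies no proof, so there is no argument in the paper for you to match; the reader is expected to take it as a black box. Your sketch is a reasonable high-level account of Mineyev's strategy, and in particular you correctly identify the genuine crux: producing a $\Gamma$-equivariant, continuous (H\"older) cocycle on $\partial_\infty^{(2)}\Gamma$ without any ambient CAT$(-1)$ structure, which is exactly the hard analytic content of Mineyev's work and cannot be replaced by coarse Gromov-product arguments, which are only coarsely cocycles and not continuous on the boundary.

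Two points where your outline is looser than the actual argument. First, the integral metric $\int d_{\hat\Gamma}(g(t),h(t))e^{-|t|}dt$ (the same kind of formula as \eqref{eqn:dGX} in the paper) is not what Mineyev uses, and it is not a priori a geodesic metric, let alone a Gromov hyperbolic one; Mineyev builds the metric on the ``symmetric join'' by a different, more delicate construction precisely so that hyperbolicity and the geodesic property can be verified. Saying ``pass to the associated path metric'' does not obviously repair this, because you then lose control of what geodesics look like. Second, your justification of (iii) has a small circularity: to apply \v{S}varc--Milnor you need to already know that $F\Gamma$ is a proper geodesic space on which $\Gamma$ acts cocompactly, and cocompactness of the $\Gamma$-action on the flow space (coarse density of a single orbit in \emph{all} of $F\Gamma$, not merely along one geodesic line) is not automatic from the bicombing and needs its own argument, typically using compactness of $\partial_\infty\Gamma$ together with equicontinuity of the parameterizations supplied by the cocycle. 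Once those two gaps are filled, hyperbolicity of $F\Gamma$ does indeed follow from (iii) by quasi-isometry invariance, as you say.
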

We shall say that $F \Gamma$ is the {\em flow space} of $\Gamma$. 
In fact, $F \Gamma$ is unique up to a $\Gamma \times \bZ/2\bZ$-equivariant quasi-isometry sending $\bR$-orbits to $\bR$-orbits. 
We shall denote by $\phi_t$ the $\bR$-action on $F \Gamma$ and by 
$(\tau_+, \tau_-):F\Gamma \ra F\Gamma/\bR \cong \partial_\infty^{(2)}\Gamma$
the maps associating to an element of $F\Gamma$ 
the endpoint of its $\bR$-orbit.
Gromov identifies $F\Gamma$ with 
$\partial_\infty^{(2)} \Gamma \times \bR$ with a certain metric 
called the Gromov metric. 

\begin{proposition} \label{prop:Uc} 
	Let $M$ be a compact manifold with a covering map $\hat M \ra M$
	with a deck transformation group $\Gamma_M$. 
	Suppose that $\Gamma_M$ is word-hyperbolic.

	Then there is 
	a quasi-isometric surjective map 
	$\mathcal{E}: G\hat M \ra \partial^{(2)}_\infty \Gamma_M \times \bR$ with compact fibers. 
	The map obtained from composing with 
	a projection to the first factor is a map given by taking the endpoints of complete isometric geodesics. 
\end{proposition}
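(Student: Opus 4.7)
The plan is to define $\mathcal{E}$ via the Gromov boundary identification combined with a basepoint normalization for the $\bR$-coordinate, and then to deduce the quasi-isometry by invoking Theorem \ref{thm:Mineyev} together with the Morse lemma.

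By the Svarc--Milnor lemma, $\hat M$ is a proper $\delta$-hyperbolic geodesic metric space whose ideal boundary is canonically homeomorphic to $\partial_\infty \Gamma_M$. Fix a basepoint $x_0\in \hat M$; for each $g\in G\hat M$, let $g(\pm\infty) \in \partial_\infty\Gamma_M$ denote its ideal endpoints and let $t(g)\in \bR$ be any value minimizing $t\mapsto d_{\hat M}(x_0, g(t))$ (well-defined up to an additive error bounded in terms of $\delta$). Define
\[ \mathcal{E}(g):= \bigl((g(-\infty), g(+\infty)),\, t(g)\bigr). \]
Surjectivity follows from the visibility property of proper Gromov hyperbolic spaces: any pair of distinct ideal points is joined by a complete isometric geodesic in $\hat M$, and time-translation along this geodesic covers the $\bR$-factor. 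The fiber over $((\xi_-,\xi_+),s)$ consists of complete geodesics with fixed endpoints $\xi_\pm$ and fixed nearest-point parameter; Lemma \ref{lem:twogeo} bounds the Hausdorff distance between any two such geodesics uniformly, while the normalization $t(g)=s$ together with the properness of $\hat M$ and Arzel\`a--Ascoli (Section \ref{sub:geoconv}) makes the fiber compact in $d_{GX}$.

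For the quasi-isometry, I would introduce the Svarc--Milnor quasi-isometry $q\colon \hat M\to \hat\Gamma_M$. Each $g\in G\hat M$ maps to the quasi-geodesic $q\circ g$ in $\hat\Gamma_M$, which by the Morse lemma lies within a uniform Hausdorff distance of a genuine geodesic $g'$ in $\hat\Gamma_M$ sharing the same pair of boundary endpoints. The assignment $g\mapsto g'$ descends to a $\Gamma_M\times\bR\times\bZ/2\bZ$-coarsely equivariant map $G\hat M\to G\hat\Gamma_M$, up to bounded error coming from both the Morse constant and the parametrization choice of $t$. Since $G\hat\Gamma_M$ together with its flow, deck, and time-reversal actions realizes the characterizing data of the Gromov flow space described in Theorem \ref{thm:Mineyev}, it is quasi-isometric to $F\Gamma_M$, which Gromov identifies with $\partial^{(2)}_\infty\Gamma_M\times\bR$. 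Composing these quasi-isometries yields $\mathcal{E}$, and it is clear from the construction that the first component of $\mathcal{E}$ is precisely the endpoint map.

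The main obstacle is the quantitative matching of the integral metric $d_{GX}$ of \eqref{eqn:dGX} with the Gromov flow metric on $\partial^{(2)}_\infty\Gamma_M\times\bR$. The former weights pointwise divergence between two parametrized geodesics with the exponential kernel $2^{-|t|}$, while the latter records how far apart the endpoint data and time parameters of two flow-lines are. Verifying both the coarse-Lipschitz and coarse-inverse bounds requires tracking the additive errors coming from the Morse replacement, the bounded choice in $t(g)$, and the $\delta$-thinness constants, and showing these do not inflate the quasi-isometry constants. Once these bounds are in hand, $\mathcal{E}$ is a quasi-isometric surjection with compact fibers of the asserted form.
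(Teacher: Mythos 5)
Your proposal takes essentially the same approach as the paper: you define $\mathcal{E}$ directly by basepoint normalization (signed parameter of the nearest point to $x_0$), get surjectivity from the visibility property of proper Gromov hyperbolic spaces, get compact fibers from Lemma \ref{lem:twogeo} together with Arzel\`a--Ascoli, and then transfer the problem to the Cayley graph flow space $G\hat\Gamma_M$ via the Svarc--Milnor quasi-isometry and the Morse lemma, using \eqref{eqn:dGX} to control the integral metric under replacement of quasi-geodesics by geodesics. This is exactly the structure of the paper's proof, where $I$ and $I'$ play the role of your $q$-induced correspondences.

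The one place you stop short is the step you yourself flag as the ``main obstacle'': the quantitative identification of $(G\hat\Gamma_M, d_{GX})$ with the Gromov flow space $\partial^{(2)}_\infty\Gamma_M\times\bR$. You propose to get this by checking that $G\hat\Gamma_M$ satisfies the axioms (i)--(iv) of Theorem \ref{thm:Mineyev} and then invoking the uniqueness of the flow space up to equivariant quasi-isometry. That route is logically sound, but verifying those axioms for $G\hat\Gamma_M$ with the metric $d_{GX}$ is exactly the substance that the paper offloads to Champetier: the paper cites \cite[Prop.~4.8 and (4.3)]{Champetier94}, which establishes directly that the analogue $\mathcal{E}'\colon G\hat\Gamma_M\to\partial^{(2)}_\infty\Gamma_M\times\bR$ is a quasi-isometric homeomorphism, and then observes that $\mathcal{E}$ and $\mathcal{E}'\circ I'$ differ by a uniformly bounded amount. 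So your argument is correct in outline but does not close the loop; filling the acknowledged obstacle requires either Champetier's proposition (the paper's choice) or an independent verification of the flow-space axioms for $(G\hat\Gamma_M, d_{GX})$, which amounts to redoing Champetier's work.
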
 
\begin{proof} 
	Choose arbitrarily a base point $x_0$ in $\hat M$. 
	For each complete isometric geodesic $g: \bR \ra \hat M$, 
	let $g_{x_0}$ denote a projection point on $g(\bR)$ that is of 
	the minimal distance from $x_0$
	and let $\partial_+ g, \partial_-g \in \partial_\infty \Gamma_M$ denote
	the forward and backward endpoints,  
	and let $t(g) \in \bR$ denote $\pm d(g(0), g_{x_0})$ where we use $+$ 
	if $g(0)$ is ahead of $g_{x_0}$.  
	
	We define 
	$\mathcal{E}(g) = (\partial_+ g, \partial_- g, t(g))$. 
	The surjectivity follows from Proposition 2.1 of Chapter 2 of 
	\cite{CDP90} since $t$ is an isometry on each complete isometric  
	geodesic. 	The compactness of the fiber is implied by Lemma \ref{lem:twogeo}. 
	
	

	Champetier \cite{Champetier94} constructs the space 
	$G\hat{\Gamma}_M$ from $\hat{\Gamma}_M$ quasi-isometric to $\Gamma_M$ 
	following Gromov. 
Recall from Section \ref{sub:Gromov} that $\partial_\infty \hat{M} = \partial_\infty \Gamma_M$. Using the orbit map $\Gamma_M \ra \hat{M}$ sending $\gamma$ to $\gamma(x_0)$, we extend the map to $\hat{\Gamma}_M \ra \hat{M}$ by sending each edge to a shortest geodesic.

Let $I: G\hat{\Gamma}_M \ra G\hat{M}$ be a map that sends a geodesic $g$ in $\hat{\Gamma}_M$ to a complete isometric geodesic $g'$ in $\hat{M}$ with the same endpoints in $\partial_\infty \hat{M}$, and maps $g(0)$ to the nearest point on the image of $g'$.

The map $I': G\hat{M} \ra G\hat{\Gamma}_M$ can be defined by taking a complete isometric geodesic $g$ in $\hat{M}$ to a geodesic $g'$ in $\hat{\Gamma}_M$ in a similar manner, with $g(0)$ going to one of the elements of $\Gamma_M(x_0)$ nearest to it.

By Theorem 11.72 of \cite{DK2018}, which states that every isometric geodesic in $\hat{\Gamma}_M$ is a quasi-geodesic in $\hat{M}$ and vice versa, and considering that every complete isometric geodesic in $\hat{M}$ is uniformly bounded away from one in $\hat{\Gamma}_M$ in the Hausdorff distance, it follows that $G\hat{M}$ is quasi-isometric to $G\hat{\Gamma}_M$ via the maps $I$ and $I'$, using \eqref{eqn:dGX}.

	
	We define 
$\mathcal{E}': G\hat{\Gamma}_M \ra \partial_\infty^{(2)} \Gamma_M \times \bR$ 
	as we did for $\mathcal{E}$ above. 
	By Proposition 4.8 and (4.3) of \cite{Champetier94}, 
	$\mathcal{E}'$ sends 
	$G\hat{\Gamma}_M$ to 
	$\partial_\infty^{(2)} \Gamma_M \times \bR
	= \partial_\infty^{(2)} \hat{M}\times \bR$ as a quasi-isometric homeomorphism. 
	Since $\mathcal{E}(g, t)$ and $\mathcal{E}'\circ I'(g, t)$ are uniformly bounded away from 
	each other,  the result follows. 
	%
	%
	%
\end{proof}


The following shows that $\Uc\hat{M}$ can be 
used as our Gromov flow space. 

\begin{theorem}\label{thm:identify} 
	The map	
	$\mathcal{E}\circ \mathcal{F}:\Uc \hat M \ra \partial_\infty^{(2)} \Gamma_M \times \bR$ 
	is a quasi-isometry where each complete isometric 
	geodesic in $\Uc \hat M$ goes to 
	$(t_1,t_2)\times \bR$ for its pair $(t_1, t_2)\in \partial_\infty^{(2)} \Gamma_M$ 
    of endpoints. 
\end{theorem}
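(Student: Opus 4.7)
The plan is to derive \thmref{thm:identify} essentially as a formal consequence of \lemref{lem:identify} and Proposition \ref{prop:Uc}, supplemented by a direct check that the geodesic flow is intertwined correctly.

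First, I would recall that \lemref{lem:identify} gives a quasi-isometric homeomorphism $\mathcal{F}\colon \Uc\hat M \to G\hat M$ sending a unit vector $(x,\vec u)$ to the unique complete isometric geodesic $g\colon\bR\to\hat M$ with $g(0)=x$ and $g'(0)=\vec u$. Then Proposition \ref{prop:Uc} provides a quasi-isometric surjection $\mathcal{E}\colon G\hat M \to \partial_\infty^{(2)}\Gamma_M\times\bR$ with compact fibers, given explicitly by $\mathcal{E}(g)=(\partial_+ g,\partial_- g,t(g))$. Since the composition of two coarse-Lipschitz maps is coarse-Lipschitz and the composition of two maps admitting coarse-Lipschitz coarse inverses again admits such an inverse (take the composition of the inverses, in reverse order), $\mathcal{E}\circ\mathcal{F}$ is a quasi-isometry. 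The only mildly delicate point is that $\mathcal{E}$ is not injective (its fibers are compact by \lemref{lem:twogeo}), but being a quasi-isometry does not require injectivity in our convention following Dru\c tu–Kapovich.

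Next I would verify the geometric statement about images of complete isometric geodesics. A complete isometric geodesic in $\Uc\hat M$ is a curve of the form $s\mapsto (g(s),g'(s))$ for a fixed complete isometric geodesic $g\colon\bR\to\hat M$. Under $\mathcal{F}$, this curve is sent to the $\bR$-family of reparametrizations $g_s\colon t\mapsto g(s+t)$, which all share the same pair of endpoints $(\partial_+ g,\partial_- g)\in\partial_\infty^{(2)}\Gamma_M$. Applying $\mathcal{E}$, the first two coordinates are therefore constant equal to $(\partial_+ g,\partial_- g)=:(t_1,t_2)$, while the third coordinate $t(g_s)=\pm d_{\hat M}(g_s(0),(g_s)_{x_0})$ differs from $s$ by a bounded additive constant (namely the signed parameter of the foot of the perpendicular from $x_0$ to $g(\bR)$). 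Consequently the image lies in $\{(t_1,t_2)\}\times\bR$ and covers a cofinite subset of it, as required.

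The main technical issue I anticipate is exactly this last parametrization point: one must confirm that the $\bR$-coordinate $t(g_s)$ behaves like an affine function of $s$ up to bounded error, uniformly in the geodesic, so that the image is all of $\{(t_1,t_2)\}\times\bR$ and not merely a bounded-distance subset. This is handled by noting that the nearest-point projection of $x_0$ onto $g(\bR)$ is uniquely defined up to bounded error in a $\delta$-hyperbolic space (by Proposition 3.1 of Chapter 3 of \cite{CDP90}, already invoked in \lemref{lem:twogeo}), and then $s\mapsto t(g_s)$ is an isometry $\bR\to\bR$ up to an additive constant depending only on $g$. Combined with the surjectivity of $\mathcal{E}$ from Proposition \ref{prop:Uc}, this completes the identification and hence the proof.
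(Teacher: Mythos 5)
Your proof takes essentially the same route as the paper: the paper's proof consists of the single sentence ``Proposition \ref{prop:Uc} and Lemma \ref{lem:identify} imply the result,'' i.e., compose the two quasi-isometries, exactly as you do. Your additional paragraph checking that a flow line of $\Uc\hat M$ maps into $\{(t_1,t_2)\}\times\bR$ with the $\bR$-coordinate affine up to bounded error is a correct and useful unpacking of what the paper leaves implicit.
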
 
\begin{proof} 
	Proposition \ref{prop:Uc} and Lemma \ref{lem:identify} imply
	the result. 
\end{proof}

\section{Partial hyperbolicity and $P$-Anosov properties} \label{sec:phyp} 


\reD{The purpose of this section is to set notations and recall standard facts since some of these are not too well-known to many geometric topologists not in this specific area.}

\subsection{Some background} \label{sub:background} 
We have to repeat some of the standard materials from
Section 2.2 of \cite{GGKW17ii} and Chapter 2 of \cite{GJT}. 
Consider a real reductive Lie group $G$ with a maximal compact subgroup $K$. 

We assume that $\mathrm{Ad}(G) \subset \Aut(\mathfrak{g})$ for simplicity. 
For example, this holds if $G$ is connected. We can usually reduce everything here to 
the case where $G$ is connected. 
Here, $G$ is an almost product of $Z(G)_0$ and $G_s$ 
where $Z(G)_0$ is the identity component of the center $Z(G)$ of $G$ 
and $G_s = D(G)$ the derived subgroup of $G$, which is semisimple. 

A parabolic subgroup $P$ of $G$ is a subgroup $P$ of the form 
$G \cap \mathbf{P}(\bR)$ for some algebraic subgroup $\mathbf{P}$ of an algebraic group $\mathbf{G}$ 
where $\mathbf{G}(\bR)/\mathbf{P}(\bR)$ is compact
and $\mathbf{G}(\bR) = G$. 

Let $\mathfrak{z}(\mathfrak{g})$ denote the Lie algebra of $Z(G)$, 
and $\mathfrak{g}_s$ the Lie algebra of $G_s$. 
Then $\mathfrak{g} = \mathfrak{z}(\mathfrak{g}) \oplus \mathfrak{g}_s$.
Letting $\mathfrak{t}$ denote the Lie algebra of $K$, 
we have $\mathfrak{g} = \mathfrak{t} \oplus \mathfrak{q}$ 
where $\mathfrak{q}$ is the $-1$-eigenspace of an involution $\theta$ preserving 
$\mathfrak{t}$ as the $1$-eigenspace.  
Let $\mathfrak{a} \subset \mathfrak{q}$ be the maximal abelian subalgebra, i.e., the Cartan subalgebra of 
$\mathfrak{g}$. Then $\mathfrak{a} = \mathfrak{z}(\mathfrak{g}) \cap \mathfrak{q} \oplus \mathfrak{a}_s$ 
where $\mathfrak{a}_s = \mathfrak{a} \cap \mathfrak{g}_s$, a maximal \green{abelian} subalgebra of 
$\mathfrak{q} \cap \mathfrak{g}_s$. 

We have a decomposition to $ad(\mathfrak{a})$-eigenspaces
 $\mathfrak{g}=\mathfrak{g}_{0} \oplus \bigoplus_{\alpha \in \Sigma} \mathfrak{g}_{\alpha}$.
 Here, 
 $\mathfrak{g}_{0}$ is the direct sum of $z(\mathfrak{g})$ and the centralizer of $\mathfrak{a}$ in $\mathfrak{g}_{s} $. 
 The set $\Sigma \subset \mathfrak{a}^{*}= \Hom_{\bR}(\mathfrak{a}, \bR)$ projects to a (possibly nonreduced) root system of $\mathfrak{a}_{s}^{*},$ and each $\alpha \in \Sigma$ is called a restricted root of $\mathfrak{a}$ in $\mathfrak{g}$.
 There is a subset 
$\Delta \subset \Sigma$ called a {\em simple system},  i.e. a subset such that any root is expressed 
uniquely as a linear combination of elements of $\Delta$ with coefficients all of the same sign; 
the elements of $\Delta$ are called simple roots.
There is a subset
$\Sigma^{+} \subset \Sigma$ which is the set of positive roots, i.e. 
 roots that are nonnegative linear combinations of elements of $\Delta$, where 
 $\Sigma=\Sigma^{+} \cup\left(-\Sigma^{+}\right)$ holds additionally. 

We define $P_\theta$ as the parabolic subgroup with the Lie algebra
\[\operatorname{Lie}\left(P_{\theta}\right)=\mathfrak{g}_{0} \oplus \bigoplus_{\alpha \in \Sigma^{+}} \mathfrak{g}_{\alpha} \oplus \bigoplus_{\alpha \in \Sigma^{+} \backslash \Sigma_{\theta}^{+}} \mathfrak{g}_{-\alpha}.\]

The connected components of $\mathfrak{a} \setminus \cup_{\alpha \in \Sigma} \ker(\alpha)$ 
are called {\em Weyl chambers} of $\mathfrak{a}$. 
The component where every $\alpha \in \Sigma_+$ is positive is denoted by $\mathfrak{a}^+$.

Let $G = \SL_\pm(n, \bR)$. 
The maximal abelian subalgebra 
$A_n$ of $\mathfrak{p}$ in this case
is the subspace of diagonal matrices with 
entries $a_1, \dots, a_n$ where $a_1 + \dots + a_n = 0$. 
Let $\log \lambda_i: A_n\ra \bR$ denote the projection to the $i$-th factor. 
$\Sigma$ consists of $\alpha_{ij} :=\log \lambda_i -\log \lambda_j$ for $i\ne j$, 
and \purple{the root space} $\mathfrak{sl}(n, \bR)_{\alpha_{ij}} = \{c E_{ij}| c\in \bR\}$ where $E_{ij}$ is a matrix with \purple{an} entry $1$ at $(i, j)$ and zero entries everywhere else. 

A positive Weyl chamber is given by
\[A_n^{+} =\{(a_1, \dots, a_n)| a_1\geq \dots \geq a_n, a_1+\cdots + a_n = 0\},\]
and we are given 
\begin{gather*} 
\Sigma^+ = \{\log \lambda_i -\log \lambda_j | i < j\} \hbox{ and }\\ 
\Delta = \{\alpha_i:= \log \lambda_i -\log \lambda_{i+1} | i=1, \dots, n-1\}.
\end{gather*}  

We recall Example 2.14 of \cite{GGKW17ii}: 
Let $G$ be $\GL(n, \bR)$ seen as a real Lie group. Its derived group is $G_{s}=D(G)=\SL(n, \bR) .$ 
We can take 
$\mathfrak{a} \subset \mathfrak{g l}(n, \bR)$ to 
be the set of real diagonal matrices of size $n \times n$. 
As above, for $1 \leq i \leq n,$ let $\log \lambda_{i} \in \mathfrak{a}^{*}$ 
be the evaluation of the $i$-th diagonal entry. 
Then 
\[\mathfrak{a} =\mathfrak{z}(\mathfrak{g}) \cap \mathfrak{a} \oplus \mathfrak{a}_{s} 
\hbox{ where }
\mathfrak{z}(\mathfrak{g}) \cap \mathfrak{a}=\bigcap_{1 \leq i, j \leq n}  \operatorname{Ker}\left(\log \lambda_{i}-\log \lambda_{j}\right)\] 
is the set of real scalar matrices, and $\mathfrak{a}_{s}=
\operatorname{Ker}\left(\log \lambda_{1}+\cdots+\log \lambda_{n}\right)$ 
is the set of traceless real diagonal matrices. The set of restricted roots of $\mathfrak{a}$ in $G$ is
$$
\Sigma=\left\{\log \lambda_{i}-\log \lambda_{j} \mid 1 \leq i \neq j \leq n\right\}.
$$
We can take $\Delta=\left\{\log \lambda_{i}-\log \lambda_{i+1} \mid 1 \leq i \leq n-1\right\}$ so that
$$
\Sigma^{+}=\left\{\log \lambda_{i}-\log \lambda_{j} \mid 1 \leq i<j \leq n\right\},
$$
and $\mathfrak{a}^{+}$ is the set of elements of $\mathfrak{a}$ whose entries are increasing. 
Let $\clo({\mathfrak{a}}^+)$ denote its closure. 
For 
\begin{multline} 
\theta=\left\{\log \lambda_{i_{1}}-\log \lambda_{i_{1}+1}, \dots, \log \lambda_{i_{m}}-\log \lambda_{i_{m}+1}\right\} \\ 
\hbox{ with } 1 \leq i_{1}<\cdots<i_{m} \leq n-1,
\end{multline} 
the parabolic
subgroup $P_{\theta}$ (resp. $\left.P_{\theta}^{-}\right)$ is the group of block upper (
resp. lower) triangular matrices in $\GL(n,\bR)$ with square diagonal blocks 
of sizes $i_{1}, i_{2}-i_{1}, \dots, i_{m}-i_{m-1}, n-i_{m}$. 
In particular, $P_{\Delta}$ is the group of upper triangular matrices in $\GL(n, \bR)$.

From now on, we will fix $\mathfrak{a}_n \subset \GL(n, \bR)$ 
as the set of real diagonal matrices of size $n \times n$. 
We will also regard $\theta$ as a subset of $\{1, \dots, n-1\}$ where 
each $i$ corresponds to $\log \lambda_i - \log \lambda_{i+1}$. 
Also, let $\mathfrak{a}_n^+ $ denote the open positive Weyl chamber.

\subsection{$P$-Anosov representations}  \label{sub:Panosov} 
Let $(P^{+}, P^{-})$ be a pair of opposite parabolic subgroups of $\GL(n, \bR)$, and 
let $\mathcal{F}^{{\pm}}$ denote the flag spaces $\GL(n, \bR)/P_{\pm}$.
Let $\chi$ denote the unique open $\GL(n, \bR)$-orbit of the product 
$\mathcal{F}^{+}\times \mathcal{F}^{-}$. 
The product subspace $\chi$ has two $\GL(n, \bR)$-invariant 
distributions $E^{\pm} := T_{x_{\pm}} \mathcal{F}^{\pm}$ for $(x^{+}, x^{-}) \in \chi$. 
\purple{Suppose that $\Gamma_M$ is a finitely generated word-hyperbolic group. }
Let $\phi_t$ be given as in Theorem \ref{thm:Mineyev} as the $\bR$-action on 
$F \Gamma_M$  whose orbits are quasi-geodesics. 
We denote by $(\tau_+, \tau_-): F\Gamma_M \ra F\Gamma_M/\bR \cong \partial_\infty \Gamma_M^{(2)}$ the maps associating to a point the \green{endpoints} of its $\bR$-orbit.


\begin{definition}[Definition 2.10 of \cite{GW12}]\label{defn:Anosov} 
\purple{Suppose that $\Gamma_M$ is a finitely generated word-hyperbolic group. }
	A representation $\rho: \Gamma_M \ra \GL(n, \bR)$ is {\em $(P^{+}, P^{-})$-Anosov} if 
	there exist continuous $\rho$-equivariant maps $\xi^+: \partial_\infty \Gamma_M \ra {\mathcal{F}}^+$, 
	$\xi^-:\partial_\infty \Gamma_M \ra {\mathcal{F}}^-$ such that:  
	\begin{enumerate} 
		\item[(i)] for all $(x, y)\in \partial_\infty^{(2)} \Gamma_M$, the pair 
		$(\xi^+(x), \xi^-(y))$ is transverse, and  
		\item[(ii)] for one (and hence any) continuous and equivariant family of norms 
		$(\llrrV{\cdot}_{m})_{m\in F\Gamma_M} $ on 
		\[(T_{\xi^+(\tau^+(m))} {\mathcal{F}}^+)_{m\in F\Gamma_M}
		\Big(\hbox{resp. } 
		(T_{\xi^-(\tau^-(m))} {\mathcal{F}}^-)_{m\in F\Gamma_M}\Big):\]
		\[ \llrrV{e}_{\phi_{-t}m} \leq A e^{-at} \llrrV{e}_{m}  \Big(\hbox{resp.} 
		\llrrV{e}_{\phi_{t}m} \leq A e^{-at} \llrrV{e}_{m} \Big).
		   \]
	\end{enumerate}
We call 	$\xi^\pm:\partial_\infty \green{\Gamma_M} \ra {\mathcal{F}}^\pm$ the {\em Anosov maps} associated with 
$\rho: \Gamma_M \ra G$. 
	(See also Section 2.5 of  \cite{GGKW17ii} for more details.)
\end{definition}

It is sufficient to consider only the case where $P^{+}$ is conjugate to $P^{-}$
by Lemma 3.18 of \cite{GW12}. (See also Definition 5.62 of \cite{KLP17}). 
In this case, $\rho$ is called {\em $P$-Anosov} for $P = P^+$. 
Note that this means that 
\begin{equation}  \label{eqn:theta*} 
\theta = \theta^*
\end{equation} 
where $\theta^*$ is the image $\theta$ under the map $i \mapsto n-i$, $i=1, \dots, n-1$.
Hence, this will always be true for the set $\theta$ occurring for $P_\theta$-Anosov 
representations below.
(See the paragraph before Fact 2.34 of \cite{GGKW17ii}.)

\subsection{$P$-Anosov representations and dominations}



Let $\rho: \Gamma_M \ra \GL(n, \bR)$ denote a representation. 
%
%
%
%
%
%
%
%
%
%
%
%
%
Let $w(g)$ denote the word length of $g$. 
Suppose that there exists an integer $k$,
$1 \leq k \leq n-k+1 \leq n$, 
so that the following hold for a constant $A, C > 0$: 
\begin{equation*}
 \frac{a_k(\rho(g))}{a_{k+1}(g)} \geq C\exp(A w(g)) \hbox{ and }
  \frac{a_{n-k}(\rho(g))}{a_{n-k+1}(g)} \geq C\exp(A w(g)).
\end{equation*} 
In this case, we say that $\rho$ is {\em $k$-dominated}
for $k \leq n/2$ 
(see Bochi-Potrie-Sambarino \cite{BPS}).





%
%


We will use $\llrrV{\cdot}$ to indicate the Euclidean norm in a maximal flat 
in a symmetric space $X$. (See Example 2.12 of \cite{KL18}.)
We will use the following notation: 
\begin{multline*} 
\begin{split} 
\vec{a}(g) & :=(a_1(g), \dots, a_n(g)), \\
\log \vec{a}(g) & := (\log a_1(g), \dots, \log a_n(g)) \in \clo(\mathfrak{a}^+_n)  \red{\hbox{(Cartan projection)}},\\ 
\log \vec{\lambda}(g)  &:= (\log \lambda_1(g), \dots, \log \lambda_n(g)) 
\in \clo(\mathfrak{a}^+_n) \red{\hbox{(Jordan projection)}}
\end{split} 
\end{multline*} 
for the singular values $a_i(g)$ and 
the modulus $\lambda_i(g)$ of the eigenvalues of $g$
for $g \in \GL(n, \bR)$.

%

Given a set $A$, 
we say that two functions $f, g:A \ra \bR$ are {\em compatible} if 
there exists a uniform constant $C>1$ such that 
$C^{-1} (x) < f(x) < C g(x)$ for all $x\in A$. 

\begin{lemma}\label{lem:Equiv} 
	Let a finitely generated \purple{word-hyperbolic} group $G$ have a representation $\rho:G \ra \GL(n, \bR)$.

	Then the following are equivalent\/{\em :} 
	\begin{itemize} 
	\item  $\rho$ is $P$-Anosov in the Gu\'eritaud-Guichard-Kassel-Wienhard sense \cite{GGKW17ii}
	for some parabolic group $P = P_\theta$ for an index set $\theta$ containing  $k$
	and $n-k$. 
	\item  $\rho$ is a $k$-dominated representation 
	for $1 \leq k \leq n/2$. 
	\end{itemize} 
\end{lemma}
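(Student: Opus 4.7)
The plan is to show that both conditions translate to the same quantitative statement about the Cartan projection, namely that the root differences $\log a_k(\rho(g)) - \log a_{k+1}(\rho(g))$ and $\log a_{n-k}(\rho(g)) - \log a_{n-k+1}(\rho(g))$ grow at least linearly in the word length $w(g)$. The equivalence is then essentially a dictionary between two established characterizations of the Anosov property: the Cartan-projection characterization of Gu\'eritaud--Guichard--Kassel--Wienhard \cite{GGKW17ii} on one side and the dominated-splitting characterization of Bochi--Potrie--Sambarino \cite{BPS} on the other.

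For the forward direction, assume $\rho$ is $P_\theta$-Anosov with $\{k, n-k\} \subset \theta$. The main theorem of \cite{GGKW17ii} (in the reformulation using Cartan projections) asserts that $P_\theta$-Anosov is equivalent to $\Gamma$ being word hyperbolic together with the growth condition $\alpha(\log \vec{a}(\rho(g))) \geq A'\, w(g) - B'$ for every simple root $\alpha \in \theta$, for some constants $A', B' > 0$. Applying this to $\alpha_k = \log\lambda_k - \log\lambda_{k+1}$ and $\alpha_{n-k}$ gives exactly
\[
\frac{a_k(\rho(g))}{a_{k+1}(\rho(g))} \geq C \exp(A\, w(g))
\quad\text{and}\quad
\frac{a_{n-k}(\rho(g))}{a_{n-k+1}(\rho(g))} \geq C \exp(A\, w(g))
\]
after exponentiating, which is the $k$-dominated condition.

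For the reverse direction, assume $\rho$ is $k$-dominated. Since $k \leq n/2$, the index set $\theta_0 := \{k, n-k\}$ is invariant under the involution $i \mapsto n-i$, so $\theta_0 = \theta_0^*$ and $P_{\theta_0}$ is conjugate to its opposite, which is the required symmetry condition \eqref{eqn:theta*} for being $P$-Anosov rather than merely $P^+$-Anosov. The main theorem of \cite{BPS} says that a representation whose singular-value gaps at indices $k$ and $n-k$ grow linearly in the word length is automatically $P_{\theta_0}$-Anosov: word hyperbolicity of the image, the quasi-isometric embedding property, and the existence and dynamical contraction of the boundary maps $\xi^\pm : \partial_\infty \Gamma_M \to \mathcal{F}^\pm$ are all consequences of the gap condition alone. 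Taking $P = P_{\theta_0}$ yields the parabolic required by the statement.

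The only point that needs a little attention is the passage between $\SL_\pm(n, \bR)$ and $\GL(n, \bR)$, since \cite{BPS} is often phrased for $\SL_\pm(n, \bR)$: this is routine because singular values and the relevant Cartan data are insensitive to the central $\bR_{>0}$-factor, so the gap condition and the Anosov property both descend unchanged. I do not expect a serious obstacle here; the content of the lemma is really the precise matching of indices, in particular the pairing of $k$ with $n-k$ coming from the self-duality requirement $\theta = \theta^*$ for $P$-Anosov representations.
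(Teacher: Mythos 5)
Your proof is correct and takes essentially the same route as the paper, which simply cites Theorem 1.3(3) of \cite{GGKW17ii} (the Cartan-projection characterization of $P_\theta$-Anosov); you have expanded that citation into the explicit dictionary between the singular-value gap condition and the $k$-dominated definition. The additional appeal to \cite{BPS} for the reverse direction is redundant since the GGKW characterization is already an if-and-only-if, but it is not wrong, and your remark that $\theta_0=\{k,n-k\}$ automatically satisfies the self-duality $\theta_0=\theta_0^*$ matches the paper's remark around \eqref{eqn:theta*}.
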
 
\begin{proof} 
	This follows \purple{from} Theorem 1.3(3) of \cite{GGKW17ii}
	since the set $\theta$ contains $k$ and $n-k$.  
\purple{Or one can use Propositions 4.5 and 4.9 of \cite{BPS}.}
\end{proof}

\section{The proof of Theorem  \ref{thm:main2}} \label{sec:PAnosov}

\subsection{$k$-convexity} \label{sub:kconvexity} 
\reD{We will relate the partial hyperbolic property in the singular-value sense to that in the bundle sense
	following \cite{BPS}.  We need this modification since we have to have actual expanding and shrinking under the flow.}

We continue to assume that $\Gamma_M$ is \purple{word-hyperbolic}. 
Let $\rho:\Gamma_M \ra \GL(n, \bR)$ be a representation. 
We say that the representation $\rho$ is {\em  $k$-convex} if there exist continuous maps 
$\zeta: \partial_\infty \Gamma_M \ra \mathcal{G}_k(\bR^n)$ and $\theta:\partial_\infty \Gamma_M\ra \mathcal{G}_{n-k}(\bR^n)$ such that the following hold:
\begin{description} 
	\item[(transversality)] for every $x,y\in\partial_\infty \Gamma_M,x\ne y$, we have $\zeta(x)\oplus\theta(y)=\bR^n$, and
	\item[(equivariance)] for every $\gamma \in \Gamma_M$, 
	we have \[\zeta(\gamma x) = \rho(\gamma)\zeta(x), \theta(\gamma x) = \rho(\gamma)\theta(x), x \in \partial_\infty \Gamma_M.\]
\end{description}


Using the representation $\rho$, it is possible to construct a linear 
flow $\psi_t$ over the geodesic flow $\phi_t$ of 
the Gromov flow space $\partial^{(2)}_\infty \hat M \times \bR$ as follows. 
Consider the lifted geodesic flow $\hat \phi_t$ on 
$\partial^{(2)}_\infty \hat M \times \bR$, and
define a linear flow on $\hat E := 
(\partial^{(2)}_\infty \hat M \times \bR)\times\bR^n$ by:
$\hat\psi_t(x,\vec{v})=(\phi_t(x), \vec{v})$ for $x\in\partial^{(2)}_\infty \hat M \times \bR$.
Now consider the action of $\Gamma_M$ on $\hat E$ given by:
\[ \gamma \cdot \left(x, \vec{v}\right):= 
\left(\gamma(x), \rho(\gamma)(\vec{v})\right). \] 
It follows that $\hat \psi_t$ induces a flow on $E_\rho:=\bR^n_{\rho}= \hat E/\Gamma_M$. 

When the representation $\rho$ is $k$-convex, by equivariance, there exists a $\phi_t$-invariant splitting of the form $E_\rho = Z \oplus \Theta$; 
it is obtained by taking the quotient of the bundles
\[\hat Z(x) = \zeta(x_+) \text{ and }
\hat \Theta(x) = \theta(x_-)
\text{ for } x \in \partial^{(2)}_\infty \hat M \times \bR\]
where $x_+$ is the forward endpoint of 
the complete isometric geodesic through $x$ and 
$x_-$ is the backward endpoint of the complete isometric geodesic through $x$. 

We say that a $k$-convex representation is {\em $k$-Anosov
in the bundle sense} if the splitting 
$E_\rho=Z\oplus\Theta$ is a dominated splitting for the linear bundle automorphism $\psi_t$, with $Z$ dominating $\Theta$ in the terminology of \cite{BPS}. 
This is equivalent to the fact that the bundle $\Hom(Z,\Theta)$ is uniformly contracted
by the flow induced by $\psi_t$.

Suppose that we further require for a $k$-dominated representation $\rho$
 \purple{where} $k \leq n/2$\/: 
\begin{itemize}  
	\item $a_p(\rho(g)) \geq C^{-1} \exp(A w(g))$ for $p \leq k$, and 
	\item $a_r(\rho(g)) \leq C \exp(-A w(g))$ for $r \geq n-k+1$
	\purple{and} some constants $C> 1, A> 0$. 
\end{itemize} 
Then we say that $\rho$ is {\em partially hyperbolic
	in the singular-value sense with the index $k$}. 
 
We obtain by Lemma \ref{lem:Equiv}:
\begin{lemma}\label{lem:phypAnos} 
	If $\rho$ is partially hyperbolic for an index $k$, $k \leq n/2$, in the singular-value sense, 
	then $\rho$ is $P_\theta$-Anosov for the index set $\theta$ containing $k$ and $n-k$. 
	\qed
\end{lemma}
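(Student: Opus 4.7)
The plan is essentially to observe that the hypothesis of the lemma already contains, as a strict subset, the $k$-domination condition, and then to invoke Lemma \ref{lem:Equiv} directly. Unpacking the definitions, ``partially hyperbolic in the singular-value sense with index $k$'' requires $\rho$ to be $k$-dominated (the two ratio inequalities for $a_k/a_{k+1}$ and $a_{n-k}/a_{n-k+1}$) together with the additional absolute growth and decay estimates
\[ a_p(\rho(g)) \geq C^{-1}\exp(Aw(g)) \text{ for } p \leq k, \qquad a_r(\rho(g)) \leq C\exp(-Aw(g)) \text{ for } r \geq n-k+1. \]
Only the first part---the $k$-domination---is needed for the conclusion.

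Having isolated the $k$-domination hypothesis, the plan is to feed it into Lemma \ref{lem:Equiv}. That lemma, applied with the given $k$ satisfying $1 \leq k \leq n/2$, yields immediately that $\rho$ is $P_\theta$-Anosov in the Gu\'eritaud--Guichard--Kassel--Wienhard sense for any parabolic subgroup $P_\theta$ whose index set $\theta$ contains both $k$ and $n-k$. Since Lemma \ref{lem:Equiv} is a biconditional, the forward direction is exactly what is required here.

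There is really no obstacle in this argument: the lemma is a packaging statement, recording the fact that partial hyperbolicity in the singular-value sense is strictly stronger than $k$-domination and hence strictly stronger than the $P_\theta$-Anosov property. The only verification to carry out is that the ranges of indices match---namely that the hypothesis $k \leq n/2$ in the lemma statement is consistent with the range $1 \leq k \leq n/2$ in Lemma \ref{lem:Equiv}, which it is. The extra hypotheses on the individual singular values $a_p$ and $a_r$ are not used for the Anosov conclusion; they will become relevant only later, in the converse direction of Theorem \ref{thm:main2}, where one must promote the abstract dominated splitting to a genuine partially hyperbolic flow with actual expanding and contracting subbundles.
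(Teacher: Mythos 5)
Your argument is correct and coincides with the paper's: the paper introduces the lemma with the sentence ``We obtain by Lemma \ref{lem:Equiv}:'' and closes the statement with a tombstone, so the intended proof is precisely to drop the extra single-singular-value bounds $a_p(\rho(g)) \geq C^{-1}\exp(Aw(g))$ and $a_r(\rho(g)) \leq C\exp(-Aw(g))$ and apply Lemma \ref{lem:Equiv} to the remaining $k$-domination hypothesis. Your explicit unpacking of the definition to isolate the $k$-domination clause is exactly what is needed and matches the paper's route.
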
 

Recall $\hat M$ the cover of $M$ with the deck transformation group $\Gamma_M$. 
Recall from Section \ref{sub:Gromov}  that we can identify $\partial_\infty \Gamma_M$ for the Cayley graph $\hat \Gamma_M$ of $\Gamma_M$ with 
$\partial_\infty \hat M$ by Theorem 11.108 of \cite{DK2018}. 
Hence, we can identify $\partial_\infty^{(2)} \Gamma_M$ with $\partial^{(2)}_\infty \hat M$. 
The Gromov flow space $\partial_\infty^{(2)} \Gamma_M \times \bR$ is identified with 
$\partial^{(2)}_\infty \hat M \times \bR$. 
By directly following Proposition 4.9 of 
Bochi-Potrie-Sambarino \cite{BPS}, we obtain: 
\begin{theorem} \label{thm:bundle} 
Let $\rho: \Gamma_M \ra \GL(n, \bR)$ be a representation.
	Let $\rho$ be partially hyperbolic with an index $k$, $k < n/2$, 
	in the singular-value sense. 
	Then 
	$\rho$ is partially hyperbolic in the bundle sense with the index $k$.
\end{theorem}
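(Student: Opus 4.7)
The plan is to adapt the argument of Proposition 4.9 of Bochi--Potrie--Sambarino \cite{BPS}, which builds a dominated splitting on the flow bundle over the Gromov flow space from a $k$-dominated representation. Our task is to strengthen the output from a two-term dominated splitting to the three-term splitting with genuine expansion, neutrality, and contraction required by Definition \ref{defn:phyp}.

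First, I would apply Lemma \ref{lem:phypAnos} to pass from partial hyperbolicity in the singular-value sense to the $P_\theta$-Anosov property for an index set $\theta$ containing both $k$ and $n-k$. Invoking $k$-convexity simultaneously at levels $k$ and $n-k$, one obtains continuous $\rho$-equivariant maps
\[ \zeta_k:\partial_\infty \Gamma_M \ra \mathcal{G}_k(\bR^n), \quad \theta_k:\partial_\infty \Gamma_M \ra \mathcal{G}_{n-k}(\bR^n), \]
\[ \zeta_{n-k}:\partial_\infty \Gamma_M \ra \mathcal{G}_{n-k}(\bR^n), \quad \theta_{n-k}:\partial_\infty \Gamma_M \ra \mathcal{G}_k(\bR^n), \]
with the standard transversality and nestedness relations $\zeta_k(x) \subset \zeta_{n-k}(x)$ and $\theta_{n-k}(x) \subset \theta_k(x)$. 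Over the Gromov flow space $\partial_\infty^{(2)} \Gamma_M \times \bR$, using the forward and backward endpoints $x_\pm$ of the $\bR$-orbit through $m$, I would define
\[ \hat \bV_+(m) := \zeta_k(x_+), \quad \hat \bV_-(m) := \theta_{n-k}(x_-), \quad \hat \bV_0(m) := \zeta_{n-k}(x_+) \cap \theta_k(x_-). \]
By the transversality built into the Anosov hypothesis, these are independent subspaces of respective dimensions $k$, $k$, and $n-2k$, whose direct sum is $\bR^n$. Equivariance of the $\zeta_\bullet$ and $\theta_\bullet$ lets them descend to subbundles $\bV_+, \bV_0, \bV_-$ of $\bR^n_\rho$, and flow-invariance is automatic since $\phi_t$ fixes $x_+$ and $x_-$.

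The heart of the argument is verifying the expansion, contraction, and dominance in item (iii) of Definition \ref{defn:phyp}. Following the strategy of \cite{BPS}, one transfers singular-value estimates to fiberwise norm estimates along the flow: the $\bR$-parameter is comparable, via the quasi-isometry $\Gamma_M \ra F\Gamma_M$ from Theorem \ref{thm:Mineyev}, to the word length $w(g_t)$ of a group element $g_t$ that translates the geodesic by roughly $t$. The expansion bound $a_p(\rho(g)) \geq C^{-1}\exp(Aw(g))$ for $p \leq k$ then yields exponential growth of any vector in $\bV_+$ under $\phi_{-t}$; likewise $a_r(\rho(g)) \leq C\exp(-Aw(g))$ for $r \geq n-k+1$ forces exponential decay on $\bV_-$ under $\phi_t$. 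The two dominance inequalities $a_k(\rho(g))/a_{k+1}(\rho(g))$ and $a_{n-k}(\rho(g))/a_{n-k+1}(\rho(g))$ each $\geq C\exp(Aw(g))$ yield both branches of \eqref{eqn:dominance}. Finally, Theorem \ref{thm:identify} provides a quasi-isometry from $\Uc \hat M$ to the Gromov flow space preserving the geodesic length parameter, so all exponential estimates pull back to $\Uc M$ unchanged up to the choice of constants.

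The main obstacle I anticipate is the passage from discrete singular-value data on $\Gamma_M$ to continuous-time fiberwise norm estimates along the flow. One has to show that, along an orbit $\{\phi_t(m)\}$, the $p$-th singular value $a_p(\rho(g_t))$ of a nearby group element controls the fiberwise norm on the corresponding subbundle, with the discrepancy absorbed in multiplicative constants. Because the orbit map $\Gamma_M \ra F\Gamma_M$ is only a quasi-isometry, and because the maps $\zeta_\bullet,\theta_\bullet$ arise from a flag limiting procedure rather than from an exact Cartan decomposition at each $g_t$, the bookkeeping required to absorb the quasi-isometry and continuity errors into the final constants $A, a, A', a'$ is the most technical point, and it is exactly here that the hypothesis $k < n/2$ (giving a genuinely middle $\bV_0$) is needed to make the dominance estimates work on both sides simultaneously.
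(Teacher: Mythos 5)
Your three-term splitting is exactly the one the paper uses: the paper forms $Z$, $Z'\cap\Theta$, $\Theta'$ out of the $k$-dominated splitting $Z\oplus\Theta$ and the $(n-k)$-dominated splitting $Z'\oplus\Theta'$, which coincides with your $\zeta_k(x_+)$, $\zeta_{n-k}(x_+)\cap\theta_k(x_-)$, $\theta_{n-k}(x_-)$, and the dominance inequality in (iii)(c) is then inherited from the two BPS splittings in the same way. The place you flag but do not resolve --- upgrading singular-value growth to genuine exponential expansion and contraction in (iii)(a)--(b), rather than mere domination --- is indeed the real content of the theorem, and calling it ``bookkeeping'' undersells it: domination is conformally invariant and by itself says nothing about absolute growth rates, so some input beyond the dominated splitting machinery is needed.

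Here is how the paper closes that gap. Write $U_p(A)$ for the span of the $p$ leading singular directions of $A$ and $S_{n-p}(A) := U_{n-p}(A^{-1})$. Theorem 2.2 of \cite{BPS} gives, using compactness of $\hat M/\Gamma_M$, the \emph{uniform} convergences $U_k\bigl(\psi^t_{\phi_{-t}(x)}\bigr)\to \hat Z(x)$ and $S_{n-k}\bigl(\psi^t_{x}\bigr)\to\hat\Theta(x)$ as $t\to\infty$. Combined with the identity $\psi^t\bigl(S_{n-k}(\psi^t_{\phi_{-t}(x)})^{\perp}\bigr)=U_k\bigl(\psi^t_{\phi_{-t}(x)}\bigr)$ and the fact that $\psi^t$ stretches unit vectors in $S_{n-k}(\psi^t_{\phi_{-t}(x)})^{\perp}$ by at least $a_k(\psi^t)$, one gets expansion \emph{provided} vectors in $\hat Z(\phi_{-t}(x))$ have a component in $S_{n-k}(\psi^t_{\phi_{-t}(x)})^{\perp}$ of uniformly positive length. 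That uniformity is supplied by the second geometric ingredient you omitted: compactness of $\hat M/\Gamma_M$ forces the angle between $\hat Z$ and $\hat\Theta$ to be bounded below by a positive constant. As the flow crosses fundamental domains it is tracked by group elements $g_i$ with $a_k(\rho(g_i))\geq C^{-1}\exp\bigl(A\,w(g_i)\bigr)$ by the singular-value hypothesis, and this, fed through the angle bound, gives (iii)(a); reversing the flow gives (iii)(b). So your plan is correct in outline, but to be a proof it needs (1) the uniform convergence statement of Theorem 2.2 of \cite{BPS}, and (2) the compactness-derived uniform angle bound between $\hat Z$ and $\hat\Theta$; neither is a routine quasi-isometry error estimate.
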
 
\begin{proof} 
By Lemmas \ref{lem:phypAnos}  and \ref{lem:Equiv}, 
	$\rho$ is $k$-dominated.

	Bochi, Potrie, and Sambarino \cite{BPS} 
	construct subbundles $Z$ and $\Theta$ of $E_\rho$ 
	where $Z$ dominates $\Theta$
	and $\dim Z = k, \dim \Theta = n-k$. 

	Now, $\rho$ is also \purple{$(n-k)$-dominated}. 
	By Lemma \ref{lem:phypAnos}, 
	we obtain a new splitting of the bundle $E_\rho = Z'\oplus \Theta'$ 
	over $\partial_\infty^{(2)}\hat M \times \bR$
	where $\dim Z' = n-k$ and $\dim \Theta'$ is $k$. 
	Furthermore, $Z'$ dominates $\Theta'$. 
	Then $Z$ is a subbundle of $Z'$, and 
	 $\Theta'$ is a subbundle of $\Theta$ by 
	Proposition 2.1 of \cite{BPS}. 
	%
	We now form bundles $Z, Z'\cap \Theta, \Theta'$ over $\partial_\infty^{(2)}\hat M \times \bR$. 
	The dominance property \eqref{eqn:dominance} of Definition \ref{defn:phyp}  follows from those of $Z, \Theta$ and $Z', \Theta'$. 
		
	There are exponential expansions 
	of $a_i(g)$ for $1\leq i \leq k$ by the partially 
	hyperbolic condition of the premise. 
	Now (iii)(a) of Definition \ref{defn:phyp} follows: 
	Define $U_p(A)$ as the sum of eigenspaces of $\sqrt{AA^*}$ corresponding to 
    \purple{the} $p$ largest eigenvalues. 
	Define $S_{n-p}(A):= U_{n-p}(A^{-1})$. 
	Since \reD{$\hat M/\Gamma_M$} is compact, Theorem 2.2 of \cite{BPS} shows 
	\[U_k(\psi^t_{\phi^{-1}_t(x)})\ra \hat Z(x) \hbox{ and } 
	S_{n-k}(\psi^t_x) \ra \hat \Phi(x) \hbox{ uniformly for $x \in \hat \Gamma_M$  as } t \ra \infty.\] 

	Since by the paragraph above Theorem 2.2 of \cite{BPS}, we obtain
	\[\psi^t(S_{n-k}(\psi^t_{\phi^{-1}_t(x)})^{\perp}) = U_k(\psi^t_{\phi^{-1}_t(x)}),\]
	where the unit vectors of $S_{n-k}(\psi^t_{\phi^{-1}_t(x)})^{\perp}$ goes to 
	vectors of norm at least the $k$-th singular value for $\psi_t$. 
		The angle between $\hat Z(x)$ and $\hat \Phi(x)$ is uniformly bounded below by 
a positive constant 
		by the compactness of \reD{$\hat M/\Gamma_M$}. 
		Hence, unit vectors in $\hat Z({\phi^{-1}_t(x)})$ have 
		components in $S_{n-k}(\psi^t_{\phi^{-1}_t(x)})^\perp$ whose lengths are uniformly bounded below by a positive constant 
		for sufficiently large $t$. 

	As $t \ra \infty$, $\phi_t$ passes the images of the fundamental domain under $\Gamma_M$. 
	Let $g_i$ denote the sequence of elements of $\Gamma_M$ arising in this way. 
		Since $a_k(g_i)$ grows exponentially, the expansion properties of $Z$ follow
\red{by the above paragraph}. 
	(iii) of  Definition \ref{defn:phyp} follows up to reversing the flow. 
	
			By the quasi-isometry of $\Uc \hat M$ with $\partial^{(2)}\hat M \times \bR$ in Theorem \ref{thm:identify},
	we construct our pulled-back partially hyperbolic bundles over $\Uc \hat M$.
%
%
%
	%
\end{proof}

\subsection{Promoting $P$-Anosov representations to  
partially hyperbolic ones}
\label{sub:converse} 

\reD{This subsection contains the main results of this paper. The complication is that the maximal abelian subgroup of the Zariski closure of the linear part of the holonomy group sometimes embeds not linearly but piecewise linearly. In addition, we have to understand the case where the Zariski closure is not reductive.}

\purple{By a {\em cone} of a subset $J$ of a vector space $V$, we mean 
the subset of nonnegative multiples of points of $J$. We denote 
it by $C(J)$. This is not necessarily convex.}. 

For a connected reductive Lie group $Z$,  
let $A_Z$ denote the maximal $\bR$-split torus of $Z$  with \purple{the} corresponding 
Lie algebra $\mathfrak{a}_Z$, and 
 let $\mathfrak{a}_Z^+$ denote the Weyl chamber of $\mathfrak{a}_Z$.
Also, we denote 
the Cartan projection by $\log \vec{a}_Z: Z \ra \clo(\mathfrak{a}^+_Z)$.
Let $G$ be a finitely generated Zariski dense group in $Z$. (See Section 1 of \cite{BS2021} 
and Section 2.4 of \cite{GGKW17ii}.)
We denote by $\log \lambda_{Z}: Z \ra  \clo(\mathfrak{a}^+_Z)$ the associated 
Jordan projection.
The {\em Benoist cone}  $\mathcal{BC}_Z(G)$ of 
$G \subset Z$ is the closure of the set of 
all positive linear combinations of 
$\log \vec{\lambda}_Z(g), g\in \Gamma$ in $\clo(\mathfrak{a}^+_Z)$ and $O$.
\purple{In particular,  $\mathcal{BC}_Z(G) \setminus \{O\}$ is path-connected and convex
in $\mathfrak{a}_Z$.
(See the paragraph before Theorem 1.2 of \cite{BS2021}.)}

We remark that we put $\mathfrak{a}_n$ as the diagonal matrices,   
and the Jordan projection here is same as the Cartan projection. 
We will use $\vec{\lambda}$ for $\mathfrak{a}_n$ following the convention.

We \green{worked out} the following proposition with the help of Danciger and Stecker: 
\begin{proposition}\label{prop:PAnosovStrict}  
Assume the following\/{\em :} 
\begin {itemize}
	\item  $G$ is a finitely generated group. 
	\item $\rho:G \ra \GL(n, \bR)$ is a
	$P_\theta$-Anosov representation 
	for some index set $\theta =\theta^*$ 
	containing $k$ and $n-k$ for 
	$1\leq k \leq n/2$.
	\item 
\begin{itemize} 
\item[(i)] $\rho(g)$ for each $g\in G$ has $1$ as the norm of an eigenvalue, 
or, alternatively, 
\item [(ii)]  \purple{For any sequence $g_i \in \pi_1(N)$, if $w(g_i) \ra \infty$, then $\frac{|\log a_{j_{g_i}}(\rho(g_i))|}{w(g_i)} \ra 0$ for some choices of 
the indices $j_{g_i}\in \{1, \dots, n\}$ depending on $g_i$.} 
\end{itemize} 
	\item The Zariski closure $Z$ of the image of $\rho$ is 
	a connected reductive Lie group. 
\end{itemize} 
	Then $\rho$ is partially hyperbolic with the index $k$ in the singular-value sense, 
	and $k < n/2$.   
	\end{proposition}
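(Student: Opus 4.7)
The plan is to combine the $k$-dominance equivalent to the Anosov hypothesis (\lemref{lem:Equiv}) with a limit cone analysis via Benoist \cite{Benoist97} and Breuillard--Sert \cite{BS2021}, extracting an asymptotic constraint from hypothesis (i) or (ii), and finally combining with the two dominance gaps to obtain partial hyperbolicity in the singular-value sense.

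First I would apply \lemref{lem:Equiv} to rewrite the $P_\theta$-Anosov hypothesis as $k$-dominance: there exist constants $A, C>0$ such that for every $g \in G$,
\[
\log a_k(\rho(g)) - \log a_{k+1}(\rho(g)) \geq A w(g) - C \quad\text{and}\quad \log a_{n-k}(\rho(g)) - \log a_{n-k+1}(\rho(g)) \geq A w(g) - C.
\]
Since $\rho(G)$ is Zariski dense in the connected reductive $Z$, Benoist's theorem on limit cones gives that the Benoist cone $\mathcal{BC}_Z(\rho(G)) \subset \clo(\mathfrak{a}_Z^+)$ is a convex cone with nonempty interior in $\mathfrak{a}_Z^+$, obtained by passing through the almost direct product decomposition of $Z$ into its central torus and its semisimple part.

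Under hypothesis (i), every Jordan projection $\log \vec{\lambda}(\rho(g))$ lies in the union $\bigcup_{i=1}^n H_i$ of the coordinate hyperplanes $H_i := \{x \in \mathfrak{a}_n : x_i = 0\}$. By the Breuillard--Sert spectrum theorem, the normalized Jordan projections are dense in the projectivized Benoist cone, so the entire cone lies in the closed set $\bigcup_i H_i$. A Baire category argument then forces $\mathfrak{a}_Z \subset H_{i_0}$ for some single index $i_0 \in \{1, \dots, n\}$: a convex subset with nonempty interior in $\mathfrak{a}_Z$ contained in a finite union of affine hyperplanes must be contained in one of them, and such a hyperplane must contain the affine hull $\mathfrak{a}_Z$ of the cone. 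Equivalently, the linear functional $\log \lambda_{i_0}$ vanishes identically on $\mathfrak{a}_Z$. Invoking Benoist's equality of the Jordan and Cartan limit cones transfers this to a singular-value statement: $\log a_{i_0}(\rho(g)) = o(w(g))$ uniformly as $w(g) \to \infty$. Under hypothesis (ii), the same conclusion follows directly from the bound $|\log a_{j_g}(\rho(g))| < C$ together with a pigeonhole on $j_g$.

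The next step is to locate $i_0$ in the middle range $k < i_0 \leq n-k$. If $i_0 \leq k$, the monotonicity of entries in $\clo(\mathfrak{a}_n^+)$ would force $v_k \leq v_{i_0} = 0$ for every $v$ in the Benoist cone; but the $k$-dominance together with Zariski density of $\rho(G)$ in the semisimple factor of $Z$ produces interior directions of the Benoist cone on which $v_k$ is strictly positive (using the nontrivial root structure of the semisimple part), a contradiction. The symmetric argument rules out $i_0 \geq n-k+1$. This placement step is the delicate one and is where the Danciger--Stecker observation on the interaction between the coordinate hyperplanes of $\GL(n,\bR)$ and the Cartan subspace of $Z$ enters. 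In particular, $k+1 \leq i_0 \leq n-k$ forces $k < n/2$.

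Finally, with $k < i_0 \leq n-k$ and $\log a_{i_0}(\rho(g)) = o(w(g))$, the monotonicity of singular values combined with the two dominance gaps yields
\[
\log a_k(\rho(g)) \geq \log a_{i_0}(\rho(g)) + A w(g) - C \geq A w(g) - o(w(g)) - C,
\]
\[
\log a_{n-k+1}(\rho(g)) \leq \log a_{i_0}(\rho(g)) - A w(g) + C \leq -A w(g) + o(w(g)) + C,
\]
which, after a mild adjustment of $A$, is precisely partial hyperbolicity in the singular-value sense with index $k$. The main obstacle throughout is the middle-placement of $i_0$ in the third paragraph, where the full force of the reductive structure and the Danciger--Stecker argument is needed; the subsequent extension of \thmref{thm:main2} to non-reductive Zariski closures will be handled by the stability of the Anosov and partial hyperbolicity properties, using the Kassel--Potrie results \cite{KP22}.
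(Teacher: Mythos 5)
Your argument takes a genuinely different and in some ways cleaner route than the paper's. Where the paper tracks, for each $b$ in the Benoist cone, the entire index set $\mathcal{I}(b)=\{i:\log\lambda_i(\iota_Z(b))=0\}$ and controls how it can change across a decomposition of the cone into ``generic flat domains'' via a discrete-continuity/chain argument (treating ranks $\geq 2$ and rank $1$ separately), you instead invoke a Baire-category argument to extract a \emph{single} index $i_0$ with $\log\lambda_{i_0}\circ\iota_Z\equiv 0$ on the span of the cone, and then locate $i_0$ in $(k,n-k]$. If correct, your route avoids the generic-flat-domain machinery entirely and handles all ranks uniformly, which is a genuine simplification. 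Your final step (III)--combining $\log a_{i_0}=o(w(g))$ with the two dominance gaps via monotonicity of singular values--matches the paper's step (III) essentially verbatim.

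There is, however, a concrete gap in your middle-placement step. You write that if $i_0\le k$ then ``the $k$-dominance together with Zariski density of $\rho(G)$ in the semisimple factor of $Z$ produces interior directions of the Benoist cone on which $v_k$ is strictly positive (using the nontrivial root structure of the semisimple part), a contradiction.'' This is asserted rather than argued, and as stated it is not obviously true: $k$-dominance gives $v_k-v_{k+1}\geq c\|v\|$ but says nothing about the sign of $v_k$, and the structure of the Benoist cone inside $\clo(\mathfrak{a}_n^+)$ depends on how $Z$ embeds. The correct repair uses the opposition involution, which is also what the paper uses: since $g\mapsto g^{-1}$ preserves $\rho(G)$ and hence $Z$, the involution $\iota_n$ preserves $\mathrm{span}(\iota_Z(B))$, so $\log\lambda_{i_0}\circ\iota_Z\equiv 0$ forces $\log\lambda_{n-i_0+1}\circ\iota_Z\equiv 0$ as well. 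If $i_0\le k$ then $i_0\le k<k+1\le n-k+1\le n-i_0+1$, and the monotonicity of coordinates in $\clo(\mathfrak{a}_n^+)$ forces $v_k=v_{k+1}=0$ for every $v\in\iota_Z(B)$, directly contradicting the Anosov gap \eqref{eqn:lklkpC}. The symmetric argument rules out $i_0\ge n-k+1$. You should also be slightly more careful with the Baire step: what it yields is that $\log\lambda_{i_0}$ vanishes on $\mathrm{span}(B)$ (where $B$ has nonempty interior by convexity), not literally that $\mathfrak{a}_Z\subset H_{i_0}$; but this weaker conclusion is all the rest of the argument needs. One more minor note: your closing reference to Kassel--Potrie for the non-reductive case is about the proof of Theorem \ref{thm:main2}, not this proposition (which assumes reductive Zariski closure), and the paper in fact handles that extension via convergence of Anosov limit maps (Theorem 5.13 of \cite{GW12}) rather than the stability results of \cite{KP22}.
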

\begin{proof} 
	(I) We begin with some preliminary results.  
%
	 	The embedding $Z \hookrightarrow \GL(n, \bR)$
	 induces a map $\iota_Z:\clo(\mathfrak{a}_Z^+) \ra \clo(\mathfrak{a}^+_n)$ for the Weyl chamber
	 $\mathfrak{a}^+_n$ of $\GL(n, \bR)$. 
	 Let $\log\lambda_1, \dots, \log\lambda_n$ denote the 
	 coordinates of the Weyl chamber $\mathfrak{a}^+_n$, 
which are actually the standard coordinates.
	 Of course, $\iota_Z \circ \log \vec{a}_Z| 
	 \rho(G) = \log\vec{\lambda}|\rho(G)$. 


	 	Let $B:=\mathcal{BC}_Z(\rho(G))$ denote the 
	 Benoist cone of $\rho(G)$ in $Z$. 
Let $S$ be a generating subset of $\rho(G)$. Then 
\purple{$B$ is a cone of  the joint spectrum $J(S)$}
as stated immediately after Theorem 1.3 of \cite{BS2021}. 
\purple{That is, $B = C(J(S))$. }
 The image $\iota_Z(B)$ is not convex in general. 




Let $S \subset \rho(G)$ be a generating set. 
\purple{The condition (ii) implies (i) by Lemma 2.11 of \cite{BS2021}. 
Under the condition (i), each vector $\log \vec{\lambda}(\rho(g)), g\in G$ 
has a coordinate with the value $0$.
By Theorem 1.2 of \cite{BS2021}, we have 
\[ \clo\left(\bigcup_{m=1}^\infty \frac{1}{m}\log \vec{\lambda}_Z(S^m)\right) = J(S)\]}
\purple{Thus 
for each $x \in J(S)$, 
there is a sequence of elements of $x_j \in \frac{1}{m_j}\log\lambda_Z(S^{m_j}) $ converging to $x$. 
Since $B$ is the \purple{cone of $J(S)$} in $a^+_Z$, 
we find that there must be an index $i$ where 
$\iota_Z(x)_i = 0$ for every $x\in B$. }

From now on, the arguments are the same for both conditions.
We denote by 
\[\mathcal{I}(b)=\{ i \mid \log \lambda_i(\iota_Z(b)) = 0\} \text{ for } b \in B.\] 
which is a consecutive set under the first premise.
	 Let $S([1, ..., n])$ denote the collection of 
	 subsets of consecutive elements.
  
	 We define an integral-interval function,
	 \[\mathcal{I}: B \setminus \{O\} \ra S([1, \dots, n])\]
	 given by sending $b \in B$ to $\mathcal{I}(b)$. 
 Since the sequences of
ordered norms of eigenvalues of a convergent sequence of linear maps converge,
we have the following: 
	 \begin{equation} \label{eqn:limI} 
	 \lim_{i\ra \infty} \mathcal{I}(x_i) \subset \mathcal{I}(x) \text{ whenever } 
	 x_i \ra x, x_i, x\in B.
	 \end{equation}

Let $S$ denote a finite set of generators of $\rho(G)$ and their inverses. 
 By \purple{Theorem 1.3} of \cite{BS2021}, the sequence
 $\frac{1}{m} \log\vec{\lambda}_Z(S^m)\subset J(S)$ geometrically converges to 
 the compact convex set $J(S) \subset\clo(\mathfrak{a}_Z^+)$  as $m \ra \infty$ 
 (see Section 3.1 of \cite{BS2021}).
Moreover, the \purple{cone of $J(S)$} is the Benoist cone $B$. 
This implies that the set of directions of 
 $\log \vec{\lambda}_Z(\rho(G))$
 is dense in \purple{that of} $B$. (See also \cite{Benoist98}). 
 Let $S_n$ denote the unit sphere in the Lie algebra $\mathfrak{a}_n$ of $\GL(n, \bR)$
with respect to the standard coordinates.
 We obtain
 \[\log \lambda_k(g) -\log \lambda_{k+1}(g) \geq C \llrrV{\log \vec{\lambda}(g)}, g \in \rho(G),
 \hbox{ for a constant } C> 0 \]
 by Theorem 4.2(3) of \cite{GGKW17ii}. 
Hence, we obtain
 \begin{equation} \label{eqn:lklkpC} 
 	\log \lambda_k(b) -\log \lambda_{k+1}(b) \geq C 
 \end{equation} 
for every $b  \in B \setminus \{O\}$ going to an element of $\iota_Z(B) \cap S_n$ 
for a constant $C> 0$.
	 
	    (II) Our first major step is to prove
	 $\mathcal{I}(B \setminus \{O\}) \subset [k+1, n-k]$: 
	  Let $\iota_n$ denote the opposite involution of $\mathfrak{a}_n$. 
First, suppose that the rank of $Z$ is $\geq 2$, and hence
$\dim \mathfrak{a}_Z^+ \geq 2$. 

   \purple{Recall from the above of 
     this proposition that $B \setminus \{O\}$ is path-connected.}
	
A {\em diagonal subspace }
\[\Delta_{i, j} \subset \mathfrak{a}_n \text{ for } i\ne j, 
i, j=1, \dots, n,\] 
is a subspace of the maximal abelian algebra
$\mathfrak{a}_n$ given by 
$\log\lambda_i - \log\lambda_j=0$ for some indices $i$ and $j$. 
	The inverse images under $\iota_Z$ in $\clo(\mathfrak{a}_Z^+)$ of the 
	diagonal subspaces of $\mathfrak{a}_n$ may meet $B$. 

\purple{Hueristically speaking, 
under $\iota_Z$, $B$ is mapped piecewise linearly in $\mathfrak{a}_n$ and when it 
meets $\Delta_{i, j}$ for some $i, j$, it continues in the subspace obtained by 
applying a reflection about $\Delta_{i, j}$.}  

\purple{Let $\mathcal{I}'_B$ the collection of indices $(k, l)$, $k< l$, of 
	$\Delta_{k, l}$ so that $B$ meets $\Delta_{k, l}$ transversally}. 
	We call the closures of the components of 
	\[B - \bigcup_{(k, l)\in \mathcal{I}'_B} \iota_Z^{-1}(\Delta_{k, l}) \] 
	the {\em generic flat \purple{cones} of $B$}.  
	There are finitely many of these 
	to be denoted $B_1, \dots, B_m$ which are convex 
	\purple{cones} in $B$.  
\purple{These are mapped linearly inside the positive Weyl chamber of 
$\mathfrak{a}_n$.}

We make notes of two ``discrete continuity" properties due to Danciger and Stecker: 
\begin{itemize} 
\item	In the interior of each $B_i$, $\mathcal{I}$ is constant since the 
$\mathcal{I}$-value is never empty and the change  in $\mathcal{I}$-values 
will happen only in the inverse images of some of $\Delta_{k,l}$. 
	
	
\item	For adjacent regions, their images under $\mathcal{I}$ differ by
	adding or removing some top or bottom consecutive sets. 
\end{itemize} 
Heuristically speaking, worms leave ``traces".
	
	We now aim to prove that the following never occurs: 
	\begin{quotation} 
	There exists a pair of elements $b_1, b_2 \in B$
	with $b_1 \in B_p^o$ and $b_2 \in B_q^o$ for some $p, q$
	where $\mathcal{I}(b_1)$ has an element $\leq k$ and 
	$\mathcal{I}(b_2)$ has an element $> k$---(*).
	\end{quotation} 
(The idea is that $k, k+1$ both cannot be in a zero set since they differ by a constant due to Anosov properties.)

We use the discrete version of continuity: 
	Suppose that this (*) is true. 
	Since $B$ is convex with $\dim B \geq 2$, 
	there is a chain of adjacent polyhedral images 
	$B_{l_1}, \dots, B_{l_m}$ where $b_1 \in B_{l_1}^o$
	and $b_2 \in B_{l_m}^o$
	where $B_{l_j}\cap B_{l_{j+1}}$ contains a nonzero point.  
	For adjacent $B_{l_p}$ and $B_{l_{p+1}}$, 
	we have by \eqref{eqn:limI} 
\begin{equation} \label{eqn:intI} 
 \mathcal{I}(B_{l_p}^o)\cup \mathcal{I}(B_{l_{p+1}}^o) \subset \mathcal{I}(x)  \text{ for }
x\in \clo(B_{l_p})\cap \clo(B_{l_{p+1}}) \setminus \{O\}. 
\end{equation} 
	Considering $\mathcal{I}(B_{l_j}^o)$, $j=1, \dots, m$, and 
	the $\mathcal{I}$-values of the intersections of adjacent generic flat \purple{cones}, we have 
$\mathcal{I}(b') \ni k, k+1$ for some nonzero element $b'$ of $B$ by 
the connectedness of the $\mathcal{I}$-values as we change
generic flat \purple{cones} to adjacent ones according to 
\eqref{eqn:intI}.
Hence, we obtain
\begin{equation}\label{eqn:kk+1}  
\log \lambda_k(b') = 0 
\hbox{ and }\log \lambda_{k+1}(b')=0.
\end{equation}

	Since we can assume $\iota_Z(b') \in S_n$ for $b'$ in \eqref{eqn:kk+1}
	by normalizing, we obtain a contradiction to \eqref{eqn:lklkpC}.

	Hence, we proved (*), and  
only one of the following holds:
	\begin{itemize} 
	\item for all $b\in B$ in the interiors of generic flat \purple{cones},  every element of $\mathcal{I}(b)$ is $\leq k$ 
	or 
	\item for all $b \in B$ in the interiors of generic flat \purple{cones}, every element of 
	$\mathcal{I}(b)$ is $> k$.
	\end{itemize} 

Note that the opposition involution $\iota_n$ sends $\iota_Z(B)$ to itself
since the inverse map $g\ra g^{-1}$ preserves $Z$ 
and the set of eigenvalues of $g$ is sent to the set of 
the eigenvalues of $g^{-1}$.
	In the first case, by the opposition involution $\iota_n$ preserving $G$, 
	$\mathcal{I}(\iota_n(b))$ for $b \in B$ contains an element $\geq n-k+1$.
	This again contradicts the above. 
	Hence, we conclude $\mathcal{I}(b) \subset [k+1, \dots, n]$ for all $b \in B$
	in the interiors of generic flat \purple{cones}. 
Acting by the opposition involution and \green{using similar} arguments, we obtain 
$\mathcal{I}(b)\subset [k+1, \dots, n-k]$ for all $b \in B$ in the interiors of generic flat 
\purple{cones}. 

Furthermore, we can show that $\mathcal{I}(b) \subset [k+1, n-k]$ for all $b \in B \setminus \{O\}$ since otherwise, 
we have $\mathcal{I}(b') \ni k, k+1$ for some $b'\ne O$ by a similar reason to the above applied to lower-dimensional strata. 
This is a contradiction as before. 
In addition, the argument shows $[k+1, n-k] \ne \emp$ and $k < n/2$.

	 Now we consider the remaining case where $Z$ has rank $1$. 
Then the maximal abelian group $A_Z$ is $1$-dimensional. 
For each $g \in Z$, there is an 
index $i(g)$ for which $\log\lambda_{i(g)}(g) =0$. 
Thus, $\mathfrak{a}_Z$ is in the null space of $\log \lambda_i$ for some index $i$. 
Since $g \mapsto g^{-1}$ preserves $\mathfrak{a}_Z$, 
we have $\log \lambda_{n-i+1} = 0$ on $\mathfrak{a}_Z$. 
Since $\clo(\mathfrak{a}_Z^+) = \mathfrak{a}_Z \cap \clo(\mathfrak{a}^+)$, we also have 
$\log \lambda_{n-i+1}=0$ on $\mathfrak{a}_Z^+$. 
Hence, 
\[\mathcal{I}(B\setminus \{O\}) = [i, \dots, n-i+1] \hbox{ for some index } i, 0< i \leq n/2.\] 
Now by \eqref{eqn:lklkpC}, we must have $k < i$. This 
implies $\mathcal{I}(B \setminus \{O\}) \subset [k+1, n-k]$. 

	(III) To complete the proof, we show the 
	partial hyperbolicity property: 
We let $\log \vec{a}_Z|Z: Z \ra \mathfrak{a}_Z^+$ denote the Cartan projection of $Z$. 
Recall 
\[\iota_Z\circ \log \vec{a}_Z| \rho(G) 
= \log \vec{a}|\rho(G).\] 
Since $\rho(G)$ has a connected reductive Zariski closure, 
the sequence 
\[\left\{\frac{\log \vec{a}(\rho(g_l))}{w(g_l)}\right\}\]
\purple{has} limit points only in $\iota_Z(B)$
by Theorem 1.3 of \cite{BS2021}. 
By  the conclusion of Step (II), 
for any sequence $\{g_l\}$ in  $G$, 
\[
\frac{\min_{i=k+1,\dots,n-k} |\log a_i(\rho(g_l))|}{w(g_l)} \ra 0,
\]
uniformly in terms of $w(g_l)$. 
Hence, this implies that for any small constant $C> 0$, there is $N> 0$ such that 
\[a_i(\rho(g)) \geq \exp(-C w(g)), g \in G\] for at least one $i \in [k+1, \dots, n-k]$ 
provided $w(g) > N$.
This means that 
\[a_i(\rho(g)) \geq A \exp(-C w(g))\] for at least one $i \in [k+1, \dots, n-k]$ for all $g\in G$ 
for a constant $A > 0$ depending on $C$. 
By the $P_\theta$-Anosov property of $\rho$ and Lemma \ref{lem:Equiv}, we have 
\[\frac{a_k(\rho(g))}{a_{k+1}(\rho(g))}\geq A'\exp(C'w(g)), g \in \Gamma\] 
for some constants $A', C'> 0$. 
Since 
\[a_{k+1}(\rho(g)) \geq a_i(\rho(g)), i=k+1, \cdots, n-k, 
\hbox{ for every } g \in G,\] 
it follows that 
\[a_k(\rho(g)) \geq A'' \exp(C'' w(g))\] for some constants $C'', A''> 0$. 
Taking inverses, we also showed that $\rho'$ is partially hyperbolic 
in the singular-value sense with the index $k$. 

Finally, since $[k+1, n-k]$ is not empty, $k < n/2$.  
	\end{proof}

For a general representation $\phi: \Gamma \ra G$, we define the {\em semisimplification} 
$\phi^{ss}: \Gamma \ra G$ by post-composing $\phi$ with the projection to 
the Levi factor of the Zariski closure of $\phi(\Gamma)$.  
(See Section 2.5.4 of \cite{GGKW17ii} for details.)
Also, the Zariski closure of a finite-index group is reductive if so is the Zariski closure of the original discrete group.
(See Remark 2.38 of \cite{GGKW17ii}.)

	\begin{proof}[Proof of Theorem  \ref{thm:main2}] 
		\purple{Suppose that $\rho$ is $P_\theta$-Anosov in the sense of \cite{GGKW17ii}
where $k \in \theta$ and $\theta^\ast =\theta$. 
		Hence, $\Gamma:= \rho(\pi_1(N))$ is word-hyperbolic by 
the definition in \cite{GGKW17ii}.}  
	
		First, suppose that the Zariski closure of the image of the linear holonomy group is 
	reductive. 
	By Lemma \ref{lem:Equiv}, $\rho$ is a $k$-dominated
	representation of the index $k$. 
We take a finite-index normal subgroup $\Gamma'$ of $\Gamma$ 
\purple{so that} the Zariski closure $Z$ of $\rho(\Gamma')$ is a connected 
	reductive Lie group. 	 
	By Proposition \ref{prop:PAnosovStrict}, $\rho|\Gamma'$ is 
	partially hyperbolic in the singular-value sense with the index $k$, $k < n/2$.
   So is $\rho$ since $\Gamma$ is a finite-index extension of $\Gamma'$. 
	By Theorem \ref{thm:bundle}, $\rho$ is a partially hyperbolic 
	representation in the bundle sense with the index $k$.

	
	
	Now, we drop the reductive condition of the Zariski closure. 
Mainly, we have to prove the expanding and contracting properties in the subbundles. 
\purple{From now on, $P:= P_\theta$. The semisimplification $\rho^{ss}$ is $P$-Anosov by Proposition 1.8 of \cite{GGKW17ii},
and the Zariski closure of the image of $\rho$ is reductive. } 
\purple{We can conjugate $\rho$ arbitrarily close to $\rho^{ss}$ in 
$\Hom(\Gamma, \GL(n, \bR))$
since the orbit of $\rho^{ss}$ is the closed orbit in the closure of the orbit of $\rho$ under the conjugation 
action. (See Section 2.5.4 of \cite{GGKW17ii}. )}
\purple{We can assume that $\rho$ has the property (i) by Lemma 2.11 of \cite{BS2021}. 
Since eigenvalues do not change under conjugation, 
$\rho^{ss}(g)$	for each $g \in \Gamma$ has an eigenvalue equal to $1$. 
Hence (i) holds for $\rho^{ss}$. }
(See also the proof of Lemma 2.40 of \cite{GGKW17ii}.)
	By Proposition \ref{prop:PAnosovStrict} and Theorem \ref{thm:bundle}, 
	$\rho^{ss}$ is a partially hyperbolic representation in the bundle sense with the index $k$, $k < n/2$. 
	By Proposition 1.3 of \cite{GGKW17ii}, $\rho$ is $P$-Anosov with respect to 
	a continuous Anosov map $\zeta: \partial_\infty \Gamma \ra {\mathcal{F}}$ for the flag variety 
	$\mathcal{F} := \GL(n, \bR)/P$.

Let $\zeta_i: \partial_\infty \Gamma \ra {\mathcal{F}}$ denote the Anosov map for $\rho_i$ 
conjugate to $\rho$ converging to $\rho^{ss}$. 
Theorem 5.13 of \cite{GW12} generalizes to $\GL(n, \bR)$ 
since we can multiply a representation 
by a homomorphism $\Gamma \ra \bR^+$ not changing the $P$-Anosov property
to $\SL_\pm(n, \bR)$-representations. 
(See Section 2.5.3 of \cite{GGKW17ii}). 
Hence, Theorem 5.13 of \cite{GW12} implies that
$\zeta_i$ converges to $\zeta^{ss}: \partial_\infty \Gamma \ra {\mathcal{F}}$
the map for $\rho^{ss}$ in the $C^0$-topology. 

Let $\bV^{ss+}, \bV^{ss0}$, and $\bV^{ss-}$ denote the bundles of the partially hyperbolic decomposition of $\bR^n_{\rho^{ss}}$ 
as in Definition \ref{defn:phyp}. 
Let $\bV^{+}_i, \bV^{0}_i$, and $\bV^{-}_i$ denote the bundles of the decomposition of $\bR^n_{\rho_i}$ 
of respective dimensions $k$, $n-2k$, and $k$ obtainable from $\zeta_i$
since $\rho_i$ is $k$-dominated.  
The  single product bundle $\Uc \hat M \times \bR^n$ covers the direct sums of these bundles.
\begin{itemize} 
\item Let $\hat \bV^{ss+}, \hat \bV^{ss0}$, and $\hat \bV^{ss-}$ denote the cover of 
$\bV^{ss+}, \bV^{ss0}$, and $\bV^{ss-}$ \purple{respectively}. 
\item Let $\hat \bV^{+}_i, \hat \bV^{0}_i$, and $\hat \bV^{-}_i$ denote the cover of 
$\bV^{+}_i, \bV^{0}_i$, and $\bV^{-}_i$ \purple{respectively}.  
\end{itemize} 
Then the above convergence $\zeta_i \ra \zeta^{ss}$ means that 
\begin{equation} \label{eqn:convB}
\hat \bV^{+}_i(x) \ra \hat \bV^{ss+}(x), \hat \bV^{0}_i(x) \ra \hat \bV^{ss0}(x), 
\text{ and } \hat \bV^{-}_i(x) \ra \hat \bV^{ss-}(x) \text{ as } i \ra \infty
\end{equation}
in the respective Grassmannian spaces pointwise for each $x \in \Uc \hat{M}$. 
 
The convergence is also uniform over every compact subset of $\Uc\hat{M}$
since these vector spaces depend only on the endpoints of quasi-geodesics through $x$
and $\hat M$ is quasi-isometric to $\partial^{(2)}_\infty \Gamma_M \times \bR$. 

We may construct a sequence of the fiberwise metrics $\llrrV{\cdot}_{\bR^n_{\rho_i}}$  so that 
the sequence of the lifted norms of $\llrrV{\cdot}_{\bR^n_{\rho_i}}$  
uniformly converging to that of  $\llrrV{\cdot}_{\bR^n_{\rho^{ss}}}$ on $\Uc \hat M \times \bR^n$
over each compact subset of $\Uc \hat M$ 
using a fixed set of a partition of unity subordinate to a covering with trivializations.  

The flow $\phi^{ss}$ on $\bR_{\rho^{ss}}$ satisfies the expansion and contraction properties
in Definition \ref{defn:phyp}. 
Note that the lift $\Phi^{ss}$ of $\phi^{ss}$ and that $\Phi_i$ of the flow $\phi_i$ for $\bR^n_{\rho_i}$ in the product bundle 
$\Uc \hat M \times \bR^n$ are the same by construction
induced from trivial action on the second factor. 
Since $M$ is compact, there exists $t_0$ such that for $t> t_0$, 
\begin{itemize} 
\item $\Phi^{ss}_{t}| \bV^{ss+} $  is expanding by a factor $c^+> 1$ 
\item $\Phi^{ss}_{t}| \bV^{ss-}$ is contracting by a factor $c^- < 1$  with respect to 
the metric $\llrrV{\cdot}_{\bR^n_{\rho^{ss}}}$. 
\end{itemize} 
Hence, for sufficiently large $i$, 
the flow $\Phi_i$ on $\bR^n_{\rho_i}$  satisfies the corresponding properties
as well as the domination properties for $\llrrV{\cdot}_{\bR^n_{\rho_i}}$ 
by \eqref{eqn:convB}. 
Hence, $\Phi_i$ is partially hyperbolic with the index $k$.  

\purple{Conversely}, suppose that $\rho$ is partially hyperbolic with the index $k$. 
Proposition \purple{4.6} of \cite{BPS} shows that $\rho$ is $k$-dominated. 
Lemma \ref{lem:Equiv} \purple{(or Proposition 4.9 of \cite{BPS})} shows that $\rho$ is $P_\theta$-Anosov for $k \in \theta$. 
Also, $k < n/2$ since the neutral bundle exists. 
\end{proof}

\section{Complete affine manifolds} \label{sec:affine}

Let $G$ be a Lie group acting transitively and faithfully on 
a space $X$. 
A {\em $(G, X)$-structure} on a manifold $N$ is a maximal atlas of charts to $X$ so that 
the transition maps are in $G$. 
This is equivalent to
$N$ having a pair $(\dev, h)$ where 
$\hat N$  is a regular cover of $N$ with a deck transformation group $\Gamma_N$
such that
\begin{itemize} 
	\item $h:\Gamma_N \ra G$ is a homomorphism, called a {\em holonomy homomorphism}, and 
	\item $\dev: \hat N \ra X$ is an immersion,  
	called a {\em developing map}, satisfying 
	\[\dev \circ \gamma = h(\gamma)\circ \dev \]
 for each deck transformation  $\gamma \in \Gamma_N$.
\end{itemize}


We say that an $m$-manifold $N$ 
with a $(G, X)$-structure  is {\em complete} if $\dev: \hat N \ra X$ is a diffeomorphism. 
If $N$ is complete, 
we have a diffeomorphism 
$N \ra X/h(\Gamma_N)$, and $h(\Gamma_N)$ acts properly discontinuously and freely on $X$. Complete $(G, X)$-structures on $N$ are classified 
by conjugacy classes of representations $\Gamma_N \ra G$ with properly discontinuous and free actions on $X$. 
(See Section 3.4 of \cite{Thurston97}.)

Note that the completeness and compactness of $N$ have 
no relation for some $(G,X)$-structures 
unless $G$ is in the isometry group of $X$ with a Riemannian metric.
(The Hopf-Rinow lemma may fail.)

Now, we move on to the affine structures. 
Let $\mathds{A}^n$ be a complete affine space. 
	Let $\Aff(\mathds{A}^n)$ denote the group of affine transformations of $\mathds{A}^n$
	whose elements are of the form: 
	\[  x \mapsto A x + \bv   \]
	for a vector $\bv \in \bR^{n}$ and $A \in \GL(n, \bR)$. 
	Let $\mathcal{L}: \Aff(\mathds{A}^n) \ra \GL(n, \bR)$ denote the map that sends each element of $\Aff(\mathds{A}^n)$ to its linear part in $\GL(n, \bR)$.

		An affine $n$-manifold is an $n$-manifold with  an $(\Aff(\mathds{A}^n), \mathds{A}^n)$-structure. 
		A {\em complete affine $n$-manifold}
			is an $n$-manifold $N$ of the form $\mathds{A}^n/\Gamma$ for $\Gamma \subset \Aff(\mathds{A}^n)$
			acting properly discontinuously and freely on $\mathds{A}^n$.

	\begin{theorem}[Hirsch-Kostant-Sullivan \cite{KS75}]\label{thm:HKS}
	Let $N$ be a complete affine $n$-manifold.
	Let $\rho:\pi_1(N) \ra \GL(n, \bR)$ be the linear part of the affine holonomy representation $\rho'$.
	Then $g$ has an eigenvalue equal to $1$ for each $g$ in the Zariski closure $Z(\rho(\pi_1(N)))$ of $\rho(\pi_1(N))$. 
\end{theorem}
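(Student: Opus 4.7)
The plan is to reduce the statement to a Zariski-closed condition on $\GL(n,\bR)$, once we have verified it on the discrete subgroup $\rho(\pi_1(N))$ itself. The main input is the properness and freeness of the affine action, which gives immediately a fixed-point obstruction.

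First I would treat a single element $g \in \pi_1(N)$. Since $N = \mathds{A}^n/\rho'(\pi_1(N))$ is a complete affine manifold, $\rho'(\pi_1(N))$ acts properly discontinuously and freely on $\mathds{A}^n$. Write $\rho'(g)(x) = \rho(g) x + \vb_g$ for some translation vector $\vb_g \in \bR^n$. If $g$ is the identity then $\rho(g) = I$ and there is nothing to prove; otherwise $\rho'(g)$ has no fixed point in $\mathds{A}^n$. Suppose for contradiction that $1$ is not an eigenvalue of $\rho(g)$. Then $\rho(g) - I$ is invertible and the affine equation $\rho(g) x + \vb_g = x$ has the unique solution $x = -(\rho(g) - I)^{-1} \vb_g \in \mathds{A}^n$, contradicting freeness. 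Hence $\rho(g)$ has $1$ as an eigenvalue for every $g \in \pi_1(N)$.

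Next I would promote this property from $\rho(\pi_1(N))$ to its Zariski closure. The key observation is that the locus
\[ W := \{ A \in \GL(n,\bR) : \det(A - I) = 0 \} \]
is the vanishing set of a single polynomial in the matrix entries, hence is Zariski closed in $\GL(n,\bR)$. The previous step shows $\rho(\pi_1(N)) \subset W$, and since $W$ is Zariski closed, we obtain $Z(\rho(\pi_1(N))) \subset W$. Equivalently, every element of $Z(\rho(\pi_1(N)))$ has $1$ as an eigenvalue, which is the conclusion.

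The main obstacle, such as it is, is purely conceptual: one must check that the affine fixed-point argument does apply to arbitrary $g$ (not only to elements of some controlled normal form) and that freeness, rather than just the absence of global fixed points, is what is being used. Everything else is the observation that ``$1 \in \mathrm{Spec}(A)$'' is a polynomial condition, so no genuine work beyond a single determinant computation is needed to pass to the Zariski closure.
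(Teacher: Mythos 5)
Your proposal is correct and is essentially the paper's own argument: use properness/freeness of the affine action to show $\det(\rho(g) - I) = 0$ for every $g$, then observe that this is a polynomial (Zariski-closed) condition to pass to the Zariski closure. The only cosmetic difference is that you phrase it as containment in the variety $W$ while the paper phrases it as the function $f(x) = \det(x - I)$ vanishing on the closure.
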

	\begin{proof} 
		We define the function $f: Z(\rho(\pi_1(N))) \ra \bR$ 
		by $f(x) = \det(x-\Idd)$ for each element $x$. 
		Suppose that $f(\rho(g)) \ne 0$, $g\in \Gamma_M \setminus \{\Idd\}$. Then $\rho(g)(x) + b = x$ has 
		a solution for each $b$ by Cramer's rule. 
		Hence, $\rho'(g)$ has a fixed point, which is a contradiction.
		We find that $f$ is zero in the Zariski closure, 
        which implies that each element has at least one eigenvalue equal to $1$.  
		\end{proof}

\begin{proof}[Proof of Corollary \ref{cor:affine}] 
By Theorem \ref{thm:HKS}, the main premise of Theorem \ref{thm:main2} is satisfied. 
Hence, the result follows. 
\end{proof}

\bibliographystyle{plain}
\bibliography{AA-PI}   

\end{document}